\definecolor{violet}{rgb}{0.6,0.4,0.8}
\theoremstyle{plain}
\newtheorem{theorem}{Theorem}[section]
\newtheorem{lemma}[theorem]{Lemma}
\theoremstyle{definition}
\newtheorem{definition}[theorem]{Definition}
\newtheorem{assumption}{Assumption}
\theoremstyle{remark}
\newtheorem{remark}{Remark}
\numberwithin{equation}{section}
\def\d{\mathrm{d}}
\def\<{\langle}
\def\>{\rangle}
\def\a{\alpha}
\def\e{\e}
\def\l{\lambda}
\def\th{\theta}
\def\o{\omega}
\def\R{{\mathbb R}}  
\def\N{{\mathbb N}}  
\def\P{{\mathbb P}}
 \def \e {{\epsilon}}
 \def \th {{\theta}}
 \def \l {{\lambda}}
 \def \d {{\delta}}
 \def \a {{\alpha}}
 \def \o {{\omega}}
 \def \th {{\theta}}
 \def\eee{\mathrm{e}}
 \newcommand{\be}{\begin{equation}}
 \newcommand{\ee}{\end{equation}}
\newcommand{\bea}{\begin{eqnarray}}
 \newcommand{\eea}{\end{eqnarray}}
\def\TH(#1){\label{#1}}\def\thv(#1){\ref{#1}}
\def\Eq(#1){\label{#1}}\def\eqv(#1){(\ref{#1})}
 \def \1{\mathbbm{1}}
\def \lb {\left(}
\def \rb {\right)}
\begin{document}
 \title[PES with phenotypic plasticity]{The Polymorphic Evolution Sequence for Populations with Phenotypic Plasticity}
  
\author[M. Baar]{Martina Baar}
\address{M. Baar\\ Institut f\"ur Angewandte Mathematik\\
Rheinische Friedrich-Wilhelms-Universit\"at\\ Endenicher Allee 60\\ 53115 Bonn, Germany}
\email{mbaar@uni-bonn.de}
\author[A. Bovier]{Anton Bovier}
\address{A. Bovier\\Institut f\"ur Angewandte Mathematik\\
Rheinische Friedrich-Wilhelms-Universit\"at\\ Endenicher Allee 60\\ 53115 Bonn, Germany}
\email{bovier@uni-bonn.de}

\keywords{adaptive dynamics, canonical equation, large population limit,
mutation-selection individual-based model}

\thanks{M. B. is supported by the German Research Foundation through 
the Priority Programme  1590 ``Probabilistic Structures in Evolution''. A.B. is partially supported by the German Research Foundation in 
the Collaborative Research Center 1060 "The Mathematics of Emergent Effects", 
the Priority Programme  1590 ``Probabilistic Structures in Evolution'',
the Hausdorff Center for Mathematics (HCM), and  the Cluster of Excellence ``ImmunoSensation'' at Bonn University.
}
\newcommand{\frakn}{\mathfrak{n}}
\begin{abstract}
In this paper we study a  class of stochastic individual-based models 
that describe the evolution of haploid populations 
 where each individual is characterised by a 
phenotype and a genotype. 
 The phenotype of an individual determines its natural birth- and death rates as well as the competition 
 kernel, $c(x,y)$ which describes the induced death rate that an individual of type $x$ experiences due
 to  the presence of an individual or type $y$. 
 When a new individual is born, with a small probability a 
 mutation occurs, i.e. the offspring has  different genotype as the parent. The novel aspect of the 
 models we study is that an individual with a given genotype may express a certain set of different 
 phenotypes, and   during its lifetime it may switch between different phenotypes, with rates that 
 are much larger then the mutation rates and that, moreover, may depend on the state of the entire 
 population. 
The evolution of the population is described by a continuous-time, 
measure-valued Markov
process. 
In \cite{B_MITC},  such a  model was proposed to describe tumor 
evolution under  immunotherapy. 
In the present paper we consider a large class of models which comprises the example studied in 
\cite{B_MITC} and analyse their scaling limits as the population size tends to infinity and the 
mutation rate tends to zero. 
Under suitable assumptions, we prove convergence to a Markov jump process that 
 is  a generalisation of the polymorphic evolution sequence (PES) as 
analysed in \cite{C_TSS,C_PES}.
\end{abstract}
\maketitle
\section{Introduction}\label{IntroCancer}

Over the last decade there has been increasing interest in the mathematical analysis of so-called \emph{stochastic individual based models of adaptive dynamics}. These models were introduced in a series 
of papers by  Bolker, Pacala, Dieckmann, and Law \cite {BolPac1,BolPac2,DieLaw}. They describe the evolution of a population
of individuals characterised by their phenotypes under the influence of the
evolutionary mechanisms of birth, death, mutation, and ecological 
competition in an inhomogeneous "fitness landscape" as a measure valued Markov process. 
In these models there appear two natural scaling parameters. 
The \emph{carrying capacity}, $K$, which regulates the size of the 
population and that can reasonably considered as a large parameter, and 
the mutation rate (of advantageous mutations), $u$, that in many biological
situations can be taken as a small parameter.  In a series of remarkable
 papers, 
 Champagnat and M\'el\'eard
	\cite {C_TSS,C_PES} (and others) have analysed the 
	limiting processes that arise in the limit when $K$ is taken to
	infinity while at the same time $u=u_K$ tends to zero. Under conditions that ensure the separation of   the  \emph{ecological}  and  
	 \emph{evolutionary} time scales.  This   means that the mutation rates are so small that 
 the system has time
	to equilibrate  (ecological time scale) between two mutational events. On the time scale 
	where mutations occur (evolutionary time scale),
	the  evolution of the population can then  be described  as a  
	Markov jump process along a sequence of 
	equilibria of, in general, polymorphic populations.
	An important (and in some sense generic) special case occurs when the mutant population  fixates while the resident 
	population dies out in each step.
The corresponding jump process is called  the \emph {Trait Substitution
		Sequence} (TSS) in adaptive dynamics.  Champagnat \cite{C_TSS} derived criteria in the 
		context of individual-based 
	models under which convergence to the TSS can be proven.
	 The general process is called the \emph {Polymorphic  Evolution Sequence}
	(PES) \cite{C_PES}. Here the limit is describes as a jump process 
	between  possibly polymorphic equilibria of systems of 
	Lotka-Volterra equations of increasing dimension.

In the present paper we extend this analysis to models where an 
additional biological phenomenon is present, the so-called 
\emph{phenotypic plasticity}. By this we mean the following. Individuals 
are no longer described by their phenotype, but by both 
their \emph{genotype} and their phenotype. Moreover, an individual of a given phenotype can express several phenotypes and it can change 
its phenotype during the course of its lifetime. 

Our original 
motivation for this comes from
applications to cancer therapy, where it is well-known that phenotypic switches (\emph{``phenotypic 
plasticity"}) is of utmost importance and in fact a major obstacle to successful therapies
(see, e.g. \cite{Holzel:2013ys} and references therein). For a first attempt at modelling specific 
scenarios in the framework of individual based stochastic models, see \cite{B_MITC}. However, 
phenotypic switches without mutations are certainly relevant in many if not most biological systems.

Here we take a broader look at a large class of models. By expanding the techniques of
 \cite{C_PES} we prove  that  the microscopic process converges  on the evolutionary time scale to a generalisation of the Polymorphic Evolution Sequences (PES) 
 (cf.\ Thm.\ \ref{PESwP}). The main difference in the proof is that we have to couple the process with multi-type branching processes instead of normal branching processes, which leads also to a different definition of invasion fitness in this setting. Note that we gave in  \cite{B_MITC} already heuristic arguments why the process should converge to a Markov jump process. The aim of this paper is to give the rigorous statement and its proof. 

The remainder of this paper is organised as follows.
In Section \ref{sec-model} we define the model, give a pathwise description of the Markov process we are studying and state the convergence towards a quadratic system of ODEs in the large population limit.
In Sections \ref{sec-PES} we consider the case of rare mutations and fast switches. More precisely,  we state the convergence  to  the  Polymorphic Evolution Sequences with phenotypic Plasticity (PESP)  in Subsection \ref{sec-Thm} and prove it in Subsection \ref{PES-prove}.

\section{The microscopic  model}\label{sec-model}
In this section we introduce the stochastic individual-based model we  analyse (cf.\ \cite{B_MITC, F_MA, C_TSS, C_PES, BanayeMeleard}). The evolutionary process changes populations on a macroscopic level, but the basic mechanisms of evolution, heredity, variation (in our context caused by mutation and phenotypical switching), and selection, act on the microscopic level of the individuals. We describe  
the evolving population as a stochastic system of interacting individuals, where each individual is characterised by its   phenotype and its genotype.
 
Let $l\geq 1$ and  $\mathcal X$ a finite set of the form  $\mathcal 
X=\mathcal G \times \mathcal P$, where $\mathcal G$ is the set of  
genotypes and
 $\mathcal P$ is the set of phenotypes. We call $\mathcal X$ 
the \emph{trait space} of the population. As usual, we introduce a parameter 
 $K\in \mathbb N$, called  the \emph{carrying capacity}. This parameter 
 allows to  scale the population size and can be interpreted as the size of 
 available space or the amount of available resources.
Let $\mathcal M(\mathcal X)$ be the set of finite, non-negative measures 
on $\mathcal X$, equipped with the topology of weak convergence, and 
let $\mathcal M^K(\mathcal X)\subset \mathcal M(\mathcal X)$ be the set of finite point 
measures on $\mathcal X$ rescaled by K, i.e.
\be
\mathcal M^K(\mathcal X)\equiv \left\{\frac 1 K \sum_{i=1}^n \delta_{x_i}\::
\:n\in \N_0,\: x_1,\ldots x_n \in\mathcal X \right\},
\ee 
where  $\delta_{x}$ denotes the Dirac mass at $x\in \mathcal X$.  
We model the time evolution of a population as an $\mathcal 
M^K(\mathcal X)$-valued, 
continuous time Markov  process $(\nu^K_t)_{t\geq 0}$. 
To account for the  process  basic mechanisms of evolution and 
the phenotypic plasticity, 
we introduce the following parameters:
\begin{enumerate}[(i)]
\setlength{\itemsep}{6pt}
\item$b(p)\in\mathbb R_+$ is the \textit{rate of birth} of an individual with phenotype $p\in\mathcal P$.
\item$d(p)\in\mathbb R_+$ is the \textit{rate of natural death} of an individual with with phenotype $p\in\mathcal P$.
\item$c(p,\tilde p)K^{-1}\in\mathbb R_+$ is the \textit{competition kernel}
		which models the competitive pressure an individual with  phenotype  $p\in\mathcal P$ feels from an individual with  phenotype  $\tilde p\in\mathcal P$ and is  inversely proportional to the carrying capacity $K$.
\item $ s_{\text{nat.}}^g(p,\tilde p)\in\R_+$ 
	is the \emph{natural switch kernel} which models the natural switching
	 from  phenotype $p$  to $\tilde p$ of individuals with genotype $g$. 
\item $ s_{\text{ind.}}^g(p, \tilde p) (\hat p)K^{-1}\in\mathbb R_+$ is the \emph{induced switch kernel} which models the switching
	 from  phenotype $p$  to $\tilde p$ of individuals with genotype $g$ induced by an individual with phenotype $\hat p$.
	
	 (Compare with the cytokine-induced switch of \cite{B_MITC}, especially the one of TNF-$\a$ (Tumour Necrosis Factor).)	 
\item$u_K m(g)$ with $u_K, m(g)\in [0, 1]$ is the \textit{probability that a mutation occurs at birth} from an individual with genotype $g\in\mathcal G$, where $u_K$ is a scaling parameter.
\item$M((g,p),(\tilde g, \tilde p))$ is the \textit{mutation law},  i.e.\ if a mutant is born from  an individual 
	 with trait $(g,p)$, then the mutant's trait is $(\tilde g,\tilde p)$ with probability  $M((g,p),(\tilde g,\tilde p))$. 
\end{enumerate}  
Note that most of the parameters depend on the phenotype only and that we explicitly allow that individuals with different genotypes can express the same phenotype and conversely that individuals with the same genotype can express different phenotypes.

\begin{assumption} \label{switch}For simplicity we assume that $ s_{\text{ind.}}^g(p, \tilde p) (\hat p)K^{-1}=0$ for all $\hat p\in \mathcal X$ whenever $ s_{\text{nat.}}^g(p,\tilde p)=0$, i.e. depending on the environment the total switching rate can be larger or smaller but \emph{not} zero or non-zero.   
\end{assumption}
At any time $t\geq0$, we consider a finite population which consist of 
$N_t$ individuals and each individual is characterised its trait  $x_i(t)\in
\mathcal X$. 
The state of a  population  at time $t$ is  the measure
\be
\nu^K_t=\frac 1 K \sum_{i=1}^{N_t}\delta_{x_i(t)}.
\ee

The population process $ \nu^K$ is a
$\mathcal M^K(\mathcal X)$-valued Markov process with infinitesimal generator ${\mathscr  L^K}$, defined, for any bounded measurable function $\phi:\mathcal M^K(\mathcal X) \rightarrow \mathbb R$ and for all $\mu^K\in \mathcal M^K(\mathcal X)$ by
\begin{align}
&\left({\mathscr  L^K}\phi\right)(\mu^K )\\\nonumber
&\:=\sum_{(g,p)\in\mathcal G\times \mathcal P}
		\left(\phi\left(\mu^K +\tfrac{\delta_{(g,p)}}K\right)-\phi(\mu^K )\right) 
			(1-u_K m(g))b(p)K\mu^K (g,p)   \\[0.2em]\nonumber
&\quad+ \sum_{(g,p)\in\mathcal G\times \mathcal P} \sum_{(\tilde g, \tilde p)\in\mathcal G\times \mathcal P}
		\left(\phi\left(\mu^K +\tfrac{\delta_{(\tilde g,\tilde  p)}}K\right)-\phi(\mu^K )\right)  
		u_K m(g) M\big((g,p),  (\tilde g,\tilde p)\big) b(p) K\mu^K (g,p)\\\nonumber
&\quad+ \sum_{(g,p)\in\mathcal G\times \mathcal P}
		\left(\phi\left(\mu^K -\tfrac{\delta_{(g,p)}}K\right)-\phi(\mu^K )\right) 
			\biggl(d(p)+\sum_{\tilde p\in\mathcal P} c(p,\tilde p)\mu^K  (\tilde p) 
					\biggr)K
						\mu^K(g,p)\\[0.2em]\nonumber
&\quad+ \sum_{(g,p)\in\mathcal G\times \mathcal P}\:\sum_{\tilde p\in\mathcal P}
	\left(\phi\left(\mu^K +\tfrac{\delta_{(g,\tilde p)}}K-\tfrac{\delta_{(g,p)}}K\right)-\phi(\mu^K )\right)
	\biggl(s_{\text{nat.}}^g(p,\tilde p)+\sum_{\hat p\in \mathcal P} s^g_{\text{ind.}}(p, \tilde p)(\hat p)\mu^K(\hat p)\biggr)\:
	 K\mu^K (g,p).\nonumber
\end{align}

The first and second terms describe the births (without and with mutation), the third term describes the deaths due to age or competition, and the last term describes the phenotypic plasticity. Observe that the first and second terms are linear (in $\mu^K$ ), but the third and fourth terms are non-linear. The only difference to the standard model is the presence of the fourth term that corresponds to the phenotypic switches. However, this term changes the dynamics substantially. In particular, the system of differential equations which arises in the large population limit without mutation ($u_K=0$) is not a generalised Lotka-Volterra system anymore, i.e.\ has \emph{not} the form $\dot{\mathfrak n}= \mathfrak n f(\mathfrak n)$, 
where $f$ is linear in $\mathfrak n$ (cf.\ Thm.\ \ref{det-limit} and Def.\ \ref{LVS}). 

\begin{remark}\label{TraitSpace}
\begin{enumerate}[(i)]
\setlength{\itemsep}{0.5pt}
\item 
Since $\mathcal X$ is finite, we could also represent the population state as 
an $|\mathcal X|$-\-di\-men\-sion\-al vector. More precisely,  let $E$ be a subset of $\mathbb R^{|\mathcal X|}$ and $E^K\equiv E\cup\{n/K: n\in \mathbb N_0\}$, then for fixed $K\geq 1$, the population process can be constructed as Markov process with state space $E^K$ by using independent standard Poisson processes (cf.\ \cite{E_MP} Chap.\ 11). 
\item 
For an extension to a non-finite trait space, e.g.\ if $\mathcal G$ and $\mathcal P$ are compact subsets of $\mathbb R^k$ for some $k\geq 1$, the modeling of switching the phenotype has to be changed in the following way:
Each individual with trait $(g,p)\in \mathcal G\times \mathcal P$  has 
instead of the natural switch kernel $s_{\text{nat.}}^g(p,\tilde p)$ a natural switch rate $s_{\text {nat.}}(g,p)$ combined with a probability measure  $ {S}_{\text {nat.}}^{(g,p)}( d \tilde p)$  on $\mathcal P$ and 
instead of the induced switch kernel $s_{\text{ind.}}^g(p,\tilde p)(\hat p)K^{-1}$ a induced switch kernel $s_{\text {ind.}}((g,p),\hat p)K^{-1}$ combined with a family of probability measure  $ \{{S}_{\text {ind.}}^{((g,p), \hat p)}( d \tilde p)\}$  on $\mathcal P$.
\end{enumerate}
\end{remark}

\subsection{Explicit construction of the population process with phenotypic plasticity}\label{construction}

 It is useful to give  a pathwise description of $ \nu^K$ in terms of Poisson point measures (cf.\ \cite{F_MA}).
Let us recall this construction.
Let $(\Omega,\mathcal F,\mathbb P)$ be an abstract probability space. 
On this space, we define the following independent random elements:
\begin{enumerate}[(i)]
\setlength{\itemsep}{6pt}
\item a convergent sequence $(\nu^K_0)_{K\geq 1}$ of $\mathcal M^K(\mathcal X)$-valued random measures  (the random initial population), 
\item $|\mathcal X|$ independent Poisson point measures
$(\,N^{\text{birth}}_{(g,p)}(ds,di,d\theta)\,)_{(g,p)\in\mathcal X}$ on 
$\,[0,\infty)\times \mathbb N\times \mathbb R_+\,$ with intensity measure
 $ds\sum_{n\geq 0}\delta_n(di)d\theta$,
\item$|\mathcal X|$ independent Poisson point measures $(\,N^{\text{mut.}}_{(g,p)}(ds,di,d\theta,dx)\,)_{(g,p)\in\mathcal X}$ on 
$\,[0,\infty)\times \mathbb N\times \mathbb R_+\times \mathcal X\,$ 
with intensity measure
$\,ds\sum_{n\geq 0}\delta_n(di)d\theta\sum_{\tilde x\in\mathcal X}\delta_{\tilde x}(dx)$.
\item $|\mathcal X|$ independent  Poisson point measures 
	$(\,N^{\text{death}}_{(g,p)}(ds,di,d\theta)\,)_{(g,p)\in\mathcal X}$ on 
$\,[0,\infty)\times \mathbb N\times \mathbb R_+\,$ with intensity measure
 $\,ds\sum_{n\geq 0}\delta_n(di)d\theta$, 
 \item $|\mathcal X|$ independent Poisson point measures 
	$(\,N^{\text{switch}}_{(g,p)}(ds,di,d\theta, dp)\,)_{(g,p)\in\mathcal X}$ on 
$\,[0,\infty)\times \mathbb N\times \mathbb R_+\times \mathcal P\,$ with intensity measure
 $\,ds\sum_{n\geq 0}\delta_n(di)d\theta\sum_{\tilde p\in\mathcal P}\delta_{\tilde p}(dp)$, 
\end{enumerate}
Then, $\nu^K$ is given by the following equation
\begin{align}
 \nu_t^K
=  \:&\nu_0^K +\!\sum_{(g,p)\in \mathcal X}\int_0^t \int_{\mathbb N_0}\int_{\mathbb R_+}
	\mathds 1_{\left\{i\leq K \nu^K_{s-}(g,p),\; 	
				\theta \leq b(p)\lb1-u_K m(g)\rb\right\}}
	 \tfrac{1}{K}{\d_{(g,p)} } N^{\text{birth}}_{(g,p)}(ds,di,d\th)\\\nonumber
 &+\sum_{(g,p)\in \mathcal X}	\int_0^t \int_{\mathbb N_0}\int_{\mathbb R_+}\int_{\mathcal X}
\mathds 1_{\left\{i\leq K \nu^K_{s-}(g,p),\; \theta \leq b( p)
u_K m( g) M( (g,p),x)\right\}}
 \tfrac{1}{K}{\d_{x}}  N_{(g,p)}^{\text{mut.}}(ds,di,d\th, dx)\\\nonumber
&
 -	\sum_{(g,p)\in \mathcal X}\int_0^t \int_{\mathbb N_0}\int_{\mathbb R_+}
	\mathds 1_{\left\{i\leq K \nu^K_{s-}(g,p),\; 
					\theta \leq d (p)
						+\sum_{\tilde p\in \mathcal P}c( p,\tilde p )  \nu^K_{s^-}(\tilde p)\right\}}
		  \tfrac{1}{K}{\d_{(g,p)}}N_{(g,p)}^{\text{death}}(ds,di,d\th)	\\	\nonumber
 &+	\sum_{(g,p)\in \mathcal X} \int_0^t \int_{\mathbb N_0}\int_{\mathbb R_+}\int_{\mathcal P}
{\mathds 1_{\left\{i\leq K  \nu^K_{s-}(g,p),\; \theta \leq s_{\text{nat.}}^g(p,\tilde p)+\sum_{\hat p\in \mathcal P} s^g_{\text{ind.}}(p, \tilde p)(\hat p)\nu_{s-}^K(\hat p)\right\}}}
\\\nonumber &\hspace{6cm}\times
\tfrac{1}{K}\lb{\d_{(g, \tilde p)}-\d_{(g,p)}} \rb N_{(g,p)}^{\text{switch}}(ds,di,d\th, d\tilde p).
\end{align}
\begin{remark} This construction uses  that $\mathcal X$ is a discrete set and is in some sense closer to the definition given in \cite{E_MP} (p.\ 455).  For non-discrete trait spaces the process can be constructed  as in \cite{F_MA}.
\end{remark}

\subsection{The Law of Large Numbers.}
If the  mutation rate is independent of $K$ and the initial conditions converge to a deterministic limit, then the sequence of rescaled processes, $(\nu_{}^K)_{K\geq 1}$, converges, almost surely, as 
$K\uparrow  \infty$ to 
the solution of  system of ODEs. This follows directly from the law of large numbers for density depending processes, see, e.g. Ethier and Kurtz  \cite{E_MP}, Chap.\ 11. The following theorem gives a precise statement.

\begin{theorem}\label{det-limit} Let  $u_K\equiv  {1}$.
	Suppose that the initial conditions converge almost surely to a deterministic
	limit, i.e.\ $\lim_{K\uparrow \infty} \nu^K_0=\nu_0$, where $\nu^{}_0$ is a finite measure on $\mathcal X$. 
	Then, for every $T>0$, exists a deterministic function $\xi\in C([0,T],\mathcal M_F(\mathcal X))$ such that  
	\be
	\lim_{K\uparrow \infty}\sup_{t\in[0,T]}\big|\big| \nu^K_t-\xi_t  \big|\big|^{}_{\text{TV}}=0 ,\qquad \text{a.s.,}
	\ee
	where $||\,.\,||{}_{\text{TV}}$ is the total variation norm.
	Moreover, let $\mathfrak n$ be the
	unique solution to the  dynamical system 
\begin{align}
	\quad\dot{\mathfrak n}_{(g,p)}(t)
	\:=&\;
	\frakn_{(g,p)}(t)
      	  \:\Biggl(\bigl(1-m(g)\bigr)b(p) -d(p)
		 -\sum_{(\tilde g, \tilde p)\in \mathcal G\times\mathcal P}  c(p,\tilde p) \frakn_{ (\tilde g,\tilde p)}(t)\\\nonumber&\hspace{2.5cm}
		-\sum_{\tilde p\in\mathcal P} \biggl( s_{\text{nat.}}^g(p,\tilde p)+\sum_{(\hat g,\hat p)\in\mathcal G\times \mathcal P} s^g_{\text{ind.}}(p, \tilde p)(\hat p) \frakn_{ (\hat g,\hat p)}(t)\biggr)
		\Biggr) \nonumber\\\nonumber
	&+\quad\sum_{\tilde p\in\mathcal P}  \;\;\frakn_{(g,\tilde p)}(t)\:\biggl( s_{\text{nat.}}^g(\tilde p, p)+\sum_{(\hat g,\hat p)\in\mathcal G\times \mathcal P} s^g_{\text{ind.}}(\tilde p,  p)(\hat p) \frakn_{ (\hat g,\hat p)}(t)\biggr)
	\\&\nonumber
	+\sum_{(\tilde g, \tilde p)\in \mathcal G\times\mathcal P}\frakn_{(\tilde g,\tilde p)}(t) \:m(\tilde g)
	 b(\tilde p) M((\tilde g,\tilde p),( g,p)), \quad  (g,p)\in\mathcal G\times \mathcal P,
	\end{align}
 with initial condition $\mathfrak n_x(0)=\nu_0(x)$  for all $x\in \mathcal X$.\\[0.5em]			
Then, $\xi$ is given as $\xi_t=\sum_{x\in \mathcal X}\frakn_x(t)\delta_x$.
\end{theorem}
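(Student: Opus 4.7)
The plan is to reduce the claim to the classical law of large numbers for density-dependent Markov chains (Ethier and Kurtz, Chap.\ 11), which applies directly because $\mathcal X$ is finite. I first identify $\nu^K_t$ with the vector $(\nu^K_t(x))_{x\in\mathcal X}$ taking values in $(K^{-1}\mathbb N_0)^{|\mathcal X|}$. Reading off the generator $\mathscr L^K$, the process $K\nu^K$ has four families of transitions ($+\delta_{(g,p)}$ from a birth with or without mutation, $-\delta_{(g,p)}$ from a death, and $\delta_{(g,\tilde p)}-\delta_{(g,p)}$ from a switch), each occurring at a rate of the form $K\beta_\ell(\nu^K)$ with $\beta_\ell$ polynomial (at most quadratic) in the coordinates and hence locally Lipschitz on bounded sets. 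The map $\mathfrak n\mapsto F(\mathfrak n)$ defined by the right-hand side of the stated ODE system is exactly the associated drift $\sum_\ell \beta_\ell(\mathfrak n)\cdot(\text{jump})$; it is locally Lipschitz, so the ODE has a unique global solution on $[0,T]$.

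Starting from the Poissonian representation of Subsection \ref{construction}, I obtain for each $x\in\mathcal X$ a semimartingale decomposition
\begin{equation}
\nu^K_t(x) = \nu^K_0(x) + \int_0^t F_x(\nu^K_s)\, ds + M^{K,x}_t,
\end{equation}
where $M^{K,x}$ is a càdlàg martingale with predictable quadratic variation bounded by $K^{-1}\int_0^t R_x(\nu^K_s)\,ds$ for an explicit polynomial $R_x$. The crucial a priori estimate is a uniform $L^2$ bound on the total mass: dominating all birth events by a pure birth process of rate $\bar b = \max_p b(p)$ (mutations and switches do not increase total mass, while death reduces it) yields $\sup_K \mathbb E[\sup_{s\le T} \nu^K_s(\mathcal X)^q]<\infty$ for every $q\ge 1$ and $T>0$. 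Combined with Doob's inequality, this gives $\mathbb E[\sup_{s\le T}(M^{K,x}_s)^2]\le C_T/K$.

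Tightness of $(\nu^K)_K$ in $\mathbb D([0,T],\mathcal M(\mathcal X))$ then follows from Aldous--Rebolledo applied coordinatewise, and any subsequential limit $\xi$ is continuous and satisfies $\xi_t(x)=\nu_0(x)+\int_0^t F_x(\xi_s)\,ds$. Uniqueness for this ODE system (by local Lipschitzness of $F$ and a Gronwall argument) identifies the limit as the unique solution $\mathfrak n$, so $\nu^K \to \xi=\sum_x \mathfrak n_x \delta_x$ in distribution, hence in probability since $\xi$ is deterministic. To upgrade to the almost sure uniform convergence stated in the theorem, I keep the Poisson noises $N^{\text{birth}},N^{\text{mut.}},N^{\text{death}},N^{\text{switch}}$ fixed across $K$ and apply a pathwise Gronwall estimate to the difference $\nu^K_t - \xi_t$: the law-of-large-numbers error of the compensated Poisson integrals is controlled by functional strong laws on compact intervals, and the drift part is handled by the local Lipschitz constant of $F$ restricted to the $L^2$-bounded tube. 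The only nontrivial point, compared with the classical Lotka--Volterra setting, is that the switch terms are genuinely quadratic in $\nu^K$ and contribute to both $+\delta_{(g,\tilde p)}$ and $-\delta_{(g,p)}$; mass conservation in each switch event is what preserves the domination by a pure birth process and hence the uniform moment bound. Everything else is routine.
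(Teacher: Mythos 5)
Your proposal is correct and follows essentially the same route as the paper, which simply invokes the law of large numbers for density-dependent families (Ethier--Kurtz, Thm.\ 2.1 of Chap.\ 11) after identifying $\nu^K$ with a vector-valued jump process as in Remark \ref{TraitSpace}(i); your Poissonian semimartingale decomposition, moment bound and pathwise Gronwall estimate are precisely the ingredients of the proof of that cited theorem, so you are in effect reproving it rather than departing from it. One small slip in your justification: a mutant birth \emph{does} increase the total mass by $1/K$, but since the combined rate of clonal and mutant births per individual of phenotype $p$ is exactly $b(p)$, your domination of the total mass by a pure birth process of per-capita rate $\bar b=\max_p b(p)$ still holds.
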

\begin{proof}
This result follows from 
Theorem 2.1 in Chapter 11 of \cite{E_MP}, since we can construct the process as described in Remark \ref{TraitSpace} (i).
For more details see \cite{DISS_Hannah}.
\end{proof}
\begin{remark}If the trait spaces is not finite, one can obtain a similar result, cf.\ \cite{F_MA}.
\end{remark}

\section{The interplay between rare mutations and fast switches.}\label{sec-PES}
In this section we state our main results. As in previous work, 
we place ourselves under the assumptions 
\begin{equation}\label{conditions}
\forall V>0, \qquad \exp(-VK)\ll u_K \ll \frac{1}{K\ln K}, \qquad \text{as } K\uparrow\infty,
\end{equation}
which ensure that a population reaches equilibrium before a new mutant appears. 
Under these assumptions we prove that the individual-based process with phenotypic plasticity convergences to a generalisation of the PES.
Let us start with describing the techniques  used in \cite{C_PES}.

{The key element in the proof of the convergence to the PES used by Champagnat and M\'el\'eard 
\cite{C_PES} is a precise analysis of how a mutant population fixates. 
A crucial assumption in  \cite{C_PES}  is that  the competitive 
Lotka-Volterra systems that describes  the large population limit 
always have a unique stable 
fixed point $ \bar \frakn$. Thus, the main task is to \emph{study the invasion of a mutant} that has just appeared in a
population close to equilibrium. The invasion can be divided into three steps:}
{First, as long as the
mutant population size is smaller than $K \epsilon$, for a fixed small $\epsilon>0$, the resident population
stays close to its equilibrium. 
 Therefore, the mutant population can be approximated by a branching 
 process. 
 Second, once the mutant population reaches the level $K \epsilon$, the 
 whole system
is close to the solution of the corresponding deterministic system and 
reaches an $\epsilon$-neighbourhood of $ \bar \frakn$ in finite time. 
Third, the subpopulations which have a zero coordinate in $ \bar \frakn$ 
can be approximated  by subcritical branching processes until they die 
out.}

{The first and third steps require a time of order  $\ln(K)$, whereas the 
 second step requires only a time of order one,  independent of $K$.  
 Since the expected time between two mutations is of order $1/(u_K K)$, 
 the the upper bound on $u_K$  in \eqref{conditions} guarantees that, with 
 high
probability, the three steps of an invasion are completed before a new 
mutation occurs.}

{In the first invasion step the invasion fitness of a mutant plays a crucial role. 
  Given a population in a stable equilibrium  that populates a certain set
  of traits, say $M\subset \mathcal X$, the invasion fitness $f(x,M)$ is the growth rate of a population consisting of a single individual 
  with trait $x\not\in M$ in the presence of the equilibrium population $ \bar\frakn$ on $M$. 
  In the case of the standard model, it is given 
  by 
  \begin{equation}
  f(x,M)= b(x)-d(x)-\sum_{y\in M} c(x,y) \bar\frakn_y.
  \end{equation}
Positive $f(x,M)$ implies that a mutant appearing with trait $x$ from the equilibrium 
  population on $M$ has a 
  positive probability (uniformly in $K$) to grow to a population of size of order $K$; 
  negative invasion fitness implies that such a mutant 
  population will die out with probability tending to one (as $K\uparrow \infty$) before it can reach a size of order $K$.
The reason for this is that the branching process (birth-death process) which approximates the mutant population is supercritical if  $f(x,M)$ is positive and subcritical if $f(x,M)$ is negative.}
  
In order to describe the dynamics of a phenotypically heterogeneous 
population on the evolutionary time scale,  we have to adapt the notion of 
invasion fitness to the case where fast phenotypic switches are present.  
Since switches between phenotypes associated to the same genotype 
happen at times of order one, the growth rate of the initial mutant 
phenotype does not determine the probability of fixation. See 
\cite{ColMelMet} for a similar issue in a model with  sexual reproduction. 
In the proof of Theorem \ref{PESwP} we  approximate the dynamics
of the mutant population  by a \emph{multi-type branching process} until 
the reaches a size  $K \epsilon$ (or dies out).
A continuous-time multi-type branching process
 is supercritical if and only if the largest eigenvalue of its infinitesimal 
 generator of  is strictly positive (cf.\ \cite{A_BP, SEW}).
 Therefore, this  eigenvalue will provide an appropriate generalisation of the invasion fitness.
\subsection{The competitive Lotka-Volterra system with phenotypic plasticity.}

We first consider 
 the large population limit  without 
 mutation $(u_K\equiv 0)$. 
Assume tha the initial condition is supported on $d$ traits,
 $(\mathbf g,\mathbf p)=((g_1,p_1),\ldots, 
(g_d,p_d))\in(\mathcal G\times\mathcal P)^d$, and  that the sequence of 
the initial conditions converges almost surely to a deterministic limit, i.e.\ 
\be 
\lim_{K\uparrow \infty}  \nu_0^K=\sum_{i=1}^d n_i(0)\delta_{(g_i,p_i)},  \quad 
\text{a.s.,}\qquad \text{ where $ n_i(0)>0$ for all $i\in\{1\ldots d\}$. }
\ee
By Theorem \ref{det-limit}, for every $T>0$, the sequences of processes 
$ \nu^K\in \mathbb D([0,T],\mathcal M^K(\mathcal X))$  generated by
 $ {\mathscr L}^K$ with initial state $ \nu_0^K$
converges almost surely,  as $K\uparrow \infty$, to a deterministic function $\xi\in C([0,T],\mathcal M(\mathcal X))$.
Since $u_K\equiv 0$, no new genotype can appear in the population process $\nu^K$. Moreover, not every genotype can express every phenotype. <let us describe the support of $\nu_t^K$  more precisely.

For all $g\in \mathcal G$,
let $X^g$ be  a  stationary discrete-time Markov chain with state space  
$\mathcal P$ and  transition probabilities 
\be
\mathbb P[X^g_i=\tilde p \:|\: X^g_{i-1}=p]=\frac{s^g(p,\tilde p)}{\sum_{\hat p \in \mathcal P}s^g(p, \hat p)}, \quad\text{ if }\sum_{\hat p \in \mathcal P}s^g(p, \hat p)>0
\ee
and 
\be
\mathbb P[X^g_i= p \:|\: X^g_{i-1}=p]=1, \quad\text{ if }\sum_{\hat p \in \mathcal P}s^g(p, \hat p)=0.
\ee
The Markov chains $\{X^g, g\in\mathcal G\}$ contain only partial 
information on the switching behaviour of the process $\nu^K$, but we 
see that this is  the key information needed later.

In the sequel we work under the following simplifying assumption:
\begin{assumption}\label{recurrence} For all $g\in \mathcal G$, all communicating classes of $X^g$ are recurrent. 
\end{assumption}

 We denote the communicating class 
associated with $(g,p)\in\mathcal G\times \mathcal P$ by $[p]_g$. This is 
the communicating class of $X^g$ which contains $p$, i.e.\ $p$ can be 
seen as  a representative of the class, which has an equivalence relation 
depending on $g$.
\begin{figure}[h!]
	\centering
	 \includegraphics[width=0.5\textwidth]{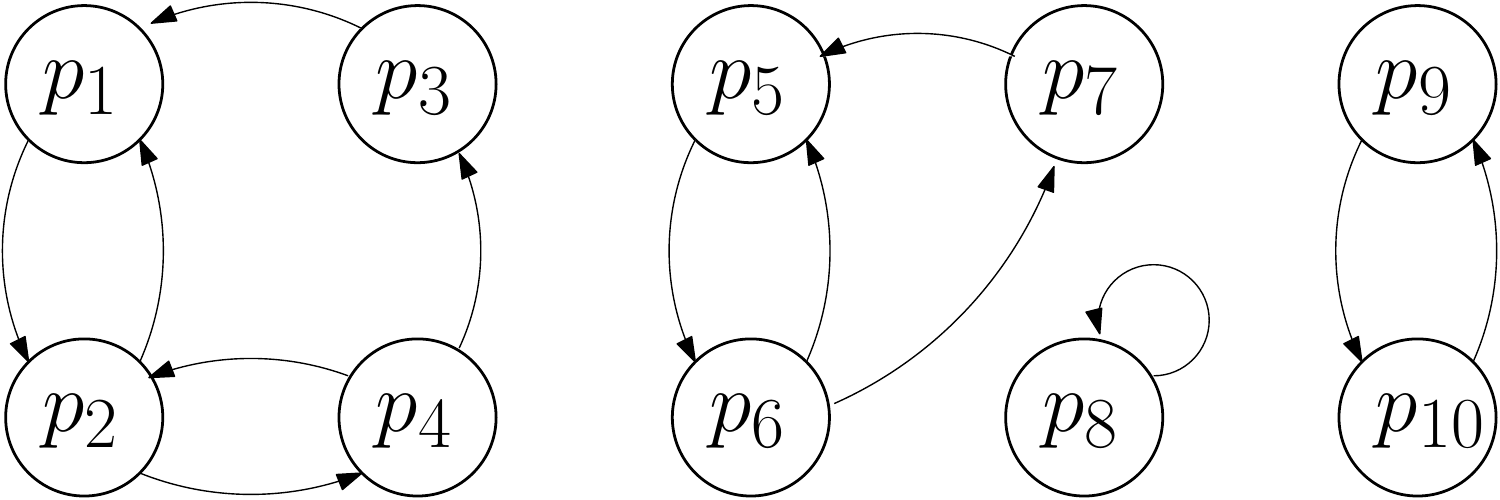}
	 \caption{\small Example of a Markov chain $X^g$. Here, $\mathcal 
	 P=\{p_1,\ldots,p_{10}\}$ and 
	 $X^g$ has four communicating classes:
	 $\{p_1,p_2,p_3,p_4\},\{p_5,p_6,p_7\},\{p_8\},\{p_9,p_{10}\}$. The 
	 class $\{p_8\}$ has only one element, i.e.\ $\sum_{i=1}^{10}
	 s^g(p_8,p_i)=0$ in this example. }	  
\end{figure}
By Assumption \ref{switch},
this ensures that if we start with a large enough population consisting only 
of individuals carrying the same trait $(g,p)$, then, after a short time, all 
phenotypes in the class $[p]_g$ will be present in the population, but 
none of the other classes. 
Observe that these Markov chains do not describe the dynamics of the 
whole process. If we allowed transient states this would not imply that 
the trait would get extinct, since its growth rate could be larger than the 
switching rate.

Thus, $[p]_{g}$ is the set of phenotypes which are reachable in the 
Markov chain $X^g$ with $X^g_0=p$ and
\emph{the set of traits which can appear in the population process $
\nu^K$} is given by
\be
\mathcal X_{(\mathbf g,\mathbf p)}\equiv \bigcup_{i=1}^d\{g_i\}\times 
[p_i]_{g_i}.
\ee

With this notation, $\xi$ is given by $\xi(t)=\sum_{x\in \mathcal X_{(\mathbf g,\mathbf p)}}  {\mathfrak n}_{x} (t)\delta_{x}$, where $ {\mathfrak n}$ is the solution of the competitive Lotka-Volterra system with phenotypic plasticity defined below.

\begin{definition} \label{LVS}
For any $ (\mathbf g,\mathbf p)\in(\mathcal G\times\mathcal P)^d$, we denote by $LVS(d,  (\mathbf g,\mathbf p))$
the  \emph{competitive Lotka-Volterra system with phenotypic plasticity}. This is an $|\mathcal X_{(\mathbf g,\mathbf p)}|$-dimensional system of ODEs given 
 by  
\begin{align}\label{deterministic-system-initial}
\dot{  {\frakn}}_{(g,p)}
=\:& \frakn_{(g,p)} 
	\biggl( b(p)-d(p)
	-\!\!\sum_{(\tilde g,\tilde p)\in \mathcal X_{(\mathbf g,\mathbf p)}}\!\!c(p,\tilde p)\frakn_{(\tilde g,\tilde p)}
	-\!\! \sum_{\tilde p \in[p]_{g}}
		\biggl( s_{\text{nat.}}^g(p,\tilde p)+\!\!\sum_{(\hat g,\hat p)\in\mathcal X_{(\mathbf g,\mathbf p)}} 
				\!\!s^g_{\text{ind.}}(p, \tilde p)(\hat p) \frakn_{ (\hat g,\hat p)}(t)
		\biggr)
	 \biggr)\nonumber\\
    &+\!\! \sum_{\tilde p \in[p]_{g}}\frakn_{(g,\tilde p)}
    		\biggl( s_{\text{nat.}}^g(\tilde p, p) +\!\!\sum_{(\hat g,\hat p)\in\mathcal X_{(\mathbf g,\mathbf p)}}
				 \!\!s^g_{\text{ind.}}(\tilde p,  p)(\hat p) \frakn_{ (\hat g,\hat p)}(t)
		\biggr), \quad (g, p)\in\mathcal X_{(\mathbf g,\mathbf p)}.
\end{align}
\end{definition} 
We choose the (possibly misleading) name \emph{competitive Lotka-Volterra system with phenotypic 
plasticity} to emphasise that we add phenotypic plasticity (induced by 
switching rates) in the usual competitive Lotka-Volterra system. 
Note, however, that  the system $LVS$ is \emph{not}  a system of 
 Lotka-Volterra equations.

We now introduce the notation of coexisting traits in this context (cf.\ \cite{C_PES}).
\begin{definition} 
For any $d\geq 2$, we say that the distinct traits  $(g_1,p_1),\ldots ,(g_d, p_d)$ \emph{coexist} if the system $LVS(d,  (\mathbf g,\mathbf p))$ has a unique non-trivial equilibrium $\bar{\mathfrak n} (\mathbf g,\mathbf p)\in (0,\infty)^{|\mathcal X_{(\mathbf g,\mathbf p)}|}$ which is locally strictly stable,  meaning that all eigenvalues of the Jacobian matrix of the system $LVS(d,  (\mathbf g,\mathbf p))$ at $\bar{\mathfrak n} (\mathbf g,\mathbf p)$ have strictly negative real parts. 
\end{definition}

Note that if $(g_1,p_1),\ldots ,(g_d, p_d)$ coexist, then all traits of $\mathcal X_{(\mathbf g,\mathbf p)}$ coexist and the equilibrium $\bar{\mathfrak n} (\mathbf g,\mathbf p)$ is asymptotically stable.
We will prove later that if the traits $(g_1,p_1),\ldots ,(g_d, p_d)$ coexist, then the \emph{invasion probability} of a mutant trait $(\tilde g, \tilde p)$ which appears in the resident population $\mathcal X_{(\mathbf g,\mathbf p)}$ close to  $\bar{\mathfrak n} (\mathbf g,\mathbf p)$ is given by the function
\be\label{invasion probability} 1-q_{{(\mathbf g,\mathbf p)}}(\tilde g,\tilde p),
\ee
 where $q_{(\mathbf g,\mathbf p)}(\tilde g,\tilde p)$ is given as  follows: 
Let us denote the elements of $[\tilde p]_{\tilde g}$ by $\tilde p_1,\tilde p_2,\ldots,\tilde p_{ |[\tilde p]_{\tilde g}|}$ and assume without lost of generality that $\tilde p= \tilde p_1$. Then,  $q_{(\mathbf g,\mathbf p)}(\tilde g,\tilde p)$ is the first component of the smallest solution of 
\be\mathbf  u(\mathbf y)=\mathbf 0,\ee
 where $\mathbf  u$ is a map from $\mathbb R^{ |[\tilde p]_{\tilde g}|}$ to $\mathbb R^{ |[\tilde p]_{\tilde g}|} $ defined for all $i\in \{1,\ldots ,|[\tilde p]_{\tilde g}|\}$ by
\begin{align}
&u_i (\mathbf y)\equiv \\\nonumber 
	&\; b(\tilde p_i)\:y_i^2
	+\sum_{j=1}^{ |[\tilde p]_{\tilde g}|}
		 \biggl( s_{\text{nat.}}^{\tilde g}(\tilde p_i, \tilde p_j) +\!\!\sum_{(g, p)\in\mathcal X_{(\mathbf g,\mathbf p)}}
				 \!\!s^{\tilde g}_{\text{ind.}}(\tilde p_i, \tilde p_j)( p) \bar \frakn_{ ( g, p)}
		\biggr)\: y_j
	+d(\tilde p_i) 
	+ \sum_{(g,p)\in\mathcal X_{(\mathbf g,\mathbf p)}} 
		c(\tilde p_i, p)\bar{\mathfrak n}_{(g,p)}(\mathbf g,\mathbf p) \\\nonumber
	&-\:\Bigg( b(\tilde p_i)+\sum_{j=1}^{ |[\tilde p]_{\tilde g}|}
		 \biggl( s_{\text{nat.}}^{\tilde g}(\tilde p_i, \tilde p_j) +\!\!\sum_{(g, p)\in\mathcal X_{(\mathbf g,\mathbf p)}}
				 \!\!s^{\tilde g}_{\text{ind.}}(\tilde p_i, \tilde p_j)( p) \bar \frakn_{ (g, p)}
		\biggr)+d(\tilde p_i) +\!\! \sum_{(g,p)\in\mathcal X_{(\mathbf g,\mathbf p)}} \!\!c(\tilde p_i, p)\bar{\mathfrak n}_{(g,p)}(\mathbf g,\mathbf p) \Bigg)\:y_i.
\end{align}
In fact, $(1-q_{{(\mathbf g,\mathbf p)}}(\tilde g,\tilde p))$ is the probability that a single mutant survives in a resident population with traits $\mathcal X_{(\mathbf g,\mathbf p)}$. We obtain this by approximating the mutant population with multi-type branching processes (cf.\ proof of Thm.\ \ref{The three steps of invasion}). The function $(1-q_{(\mathbf g,\mathbf p)}(\tilde g,\tilde p))$ plays the same role as the function
$[f(y;\mathbf x)]_+ / b(y)$ in the standard case (cf.\  \cite{C_PES}).

To obtain that the process jumps on the evolutionary time scale from one equilibrium to the next, we need an assumption to prevent cycles, unstable equilibria or chaotic dynamics in the  
deterministic system (cf.\ \cite{C_PES} Ass.\ B).

\begin{assumption}\label{conv_to_fixedpoint}
For any given traits $(g_1,p_1),\ldots ,(g_d, p_d)\in \mathcal G\times \mathcal P$ 
that coexist and for any mutant trait $(\tilde g,\tilde p)\in\mathcal X\setminus \mathcal X_{(\mathbf g,\mathbf p)}$  
such that $ 1-q_{{(\mathbf g,\mathbf p)}}(\tilde g,\tilde p)>0$, there exists a neighbourhood $U\subset \mathbb R^{|\mathcal X_{(\mathbf g,\mathbf p)}|+ |[\tilde p]_{\tilde g}|}$ of  $(\bar{\mathfrak n}(\mathbf g,\mathbf p),0,\ldots,0)$
 such that all solutions of  $LVS(d+1,  ((\mathbf g,\mathbf p),(\tilde g,\tilde p)))$ with initial condition in $U\cap (0,\infty)^{|\mathcal X_{(\mathbf g,\mathbf p)}|+ |[\tilde p]_{\tilde g}|}$
converge as $t \uparrow  \infty$ to a unique locally strictly stable
 equilibrium in $\mathbb R^{|\mathcal X_{(\mathbf g,\mathbf p)}|+ |[\tilde p]_{\tilde g}|}$ denoted by $\mathfrak n^{*}((\mathbf g,\mathbf p),(\tilde g,\tilde p))$.
\end{assumption}

We write $\mathfrak n^{*}$ and not $\bar{\mathfrak n}$ to emphasise that 
some components of $\mathfrak n^{*}$ can be zero. We use the 
shorthand notation
$((\mathbf g,\mathbf p),(\tilde g,\tilde p))$ for $((g_1,p_1),\ldots ,(g_d, p_d),(\tilde g,\tilde p))$. 
Assumption \ref{conv_to_fixedpoint}  does not have to hold for all traits in $\mathcal X\setminus \mathcal X_{(\mathbf g,\mathbf p)}$, but  only for those traits $(\tilde g,\tilde p)$ which can appear in the resident population by mutation, i.e.\ only if $\sum_{(g,p)\in\mathcal X_{(\mathbf g,\mathbf p)}}m(g)M((g,p),(\tilde g,\tilde p))$ is positive.

\begin{remark}It is possible to extend the definitions and assumptions for the study of rare mutations and fast switches in populations with \emph{non-discrete trait} space if one assumes that an individual  can change its phenotype only to finitely many other phenotypes. This must be encoded in the switching kernels. More precisely, for all $(g,p)\in\mathcal G\times \mathcal P$  the communicating class $[p]_g$ should contain  finitely many elements. 
\end{remark}
\subsection{Convergence to the generalised Polymorphic Evolution Sequence. } \label{sec-Thm}
In this subsection we state the main theorem of this paper and give the general idea of the proof illustrated by an example. 

\begin{theorem}\label{PESwP} Suppose that the Assumptions \ref{switch}, \ref{recurrence} and  \ref{conv_to_fixedpoint} hold. 
Fix $(g_1,p_1),\ldots, (g_d,p_d)\in\mathcal G\times\mathcal P$ coexisting traits and assume that the initial conditions have support $\mathcal X_{(\mathbf g,\mathbf p)}$ and 
converge almost surely to $\bar{\mathfrak n} (\mathbf g,\mathbf p)$, i.e.\ $\lim_{K\uparrow \infty}  \nu_0^K=\sum_{x\in \mathcal X_{(\mathbf g,\mathbf p)}} \bar{\mathfrak n}_{x} (\mathbf g,\mathbf p)\delta_{x}$ a.s.. Furthermore, assume  that 
\begin{align}\label{Conv_Cond}
\forall V>0, \qquad \exp(-VK)\ll u_K \ll \frac{1}{K\ln(K)}, \qquad \text{as } K\uparrow \infty.
\end{align}
Then, the sequence of the rescaled processes $(\nu^{K}_{ t/ Ku_K})_{t\geq 0}$, generated by $ {\mathscr L}^K$ with initial state $\nu_0^K$, converges in the sense of finite dimensional distributions to the measure-valued pure jump process $\Lambda$ defined as follows:
$\Lambda_0=\sum_{(g,p)\in\mathcal X_{(\mathbf g,\mathbf p)}} \bar{\mathfrak n}_{(g,p)} (\mathbf g,\mathbf p) \delta_{(g,p)}$
and the process $\:\Lambda\:$ jumps for all $(\hat g,\hat p)\in \mathcal X_{(\mathbf g,\mathbf p)}$  from
 \be 
 \sum_{(g,p)\in\mathcal X_{(\mathbf g,\mathbf p)}} \bar{\mathfrak n}_{(g,p)} (\mathbf g,\mathbf p) \delta_{(g,p)}
\quad \text{ to } \quad
  \sum_{(g,p)\in\mathcal X_{((\mathbf g,\mathbf p),(\tilde g,\tilde p))}} \mathfrak n^*_{(g,p)} ((\mathbf g,\mathbf p),(\tilde g,\tilde p)) \delta_{(g,p)}
  \ee
  with infinitesimal rate
  \be
  m(\hat g)b(\hat p) \bar{\mathfrak n}_{(\hat g,\hat p)} (\mathbf g,\mathbf p) (1-q_{{(\mathbf g,\mathbf p)}}(\tilde g,\tilde p))M((\hat g,\hat p), (\tilde g,\tilde p)).
  \ee
\end{theorem}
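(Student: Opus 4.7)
My plan is to follow the three-phase invasion analysis of Champagnat-M\'el\'eard \cite{C_PES} and then iterate via the strong Markov property, with the essential novelty lying in replacing the single-type birth-death approximation for the mutant subpopulation by a multi-type continuous-time branching process indexed by the phenotypes of the communicating class $[\tilde p]_{\tilde g}$. The upper bound $u_K \ll 1/(K\ln K)$ in \eqref{Conv_Cond} guarantees that, with probability tending to $1$, the three phases of an invasion are completed before the next mutation occurs, while $\exp(-VK)\ll u_K$ ensures that typical deviations of the stochastic system from its deterministic limit cannot trigger spurious extinctions during an inter-mutation period. Given these facts, to obtain the jump rates stated in Theorem~\ref{PESwP} it suffices to analyse a single invasion step: starting with $\nu_0^K$ close to $\bar{\mathfrak n}(\mathbf g,\mathbf p)$, to show that in time of order $1/(Ku_K)$ exactly one successful mutation appears, with the correct rate and transition law.

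The single invasion is broken into the standard three phases. \emph{Phase 1 (branching regime).} So long as the mutant family has size at most $\varepsilon K$, the resident population stays in an $O(\varepsilon)$-neighbourhood of $\bar{\mathfrak n}(\mathbf g,\mathbf p)$ by an elementary large deviation/Gr\"onwall argument combined with Assumption~\ref{conv_to_fixedpoint}. The mutant subpopulation, indexed by phenotypes in $[\tilde p]_{\tilde g}$, is then sandwiched between two multi-type branching processes with generators
\begin{equation}
A_{ij}^{\pm}=\bigl(b(\tilde p_i)-d(\tilde p_i)-\!\!\sum_{(g,p)\in\mathcal X_{(\mathbf g,\mathbf p)}}\!\!c(\tilde p_i,p)\bar{\mathfrak n}_{(g,p)}\pm O(\varepsilon)\bigr)\delta_{ij}+\bigl(s^{\tilde g}_{\text{nat.}}(\tilde p_i,\tilde p_j)+\!\!\sum_{(g,p)\in\mathcal X_{(\mathbf g,\mathbf p)}}\!\!s^{\tilde g}_{\text{ind.}}(\tilde p_i,\tilde p_j)(p)\bar{\mathfrak n}_{(g,p)}\pm O(\varepsilon)\bigr).
\end{equation}
Sending $\varepsilon\downarrow 0$ after $K\uparrow\infty$, the survival probability of the mutant family starting with a single $(\tilde g,\tilde p)$-individual converges to $1-q_{(\mathbf g,\mathbf p)}(\tilde g,\tilde p)$, where $q$ is the first coordinate of the smallest fixed point of the generating-function system $\mathbf u(\mathbf y)=0$ displayed in the excerpt; this is the classical extinction equation for continuous-time multi-type branching processes (cf.\ \cite{A_BP}), and $1-q>0$ is equivalent to the largest eigenvalue of $A$ being strictly positive, which is the correct invasion-fitness criterion. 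Standard estimates then show that surviving families reach size $\varepsilon K$ in time $O(\ln K)$. \emph{Phase 2 (deterministic regime).} Once the mutant family reaches $\varepsilon K$, Theorem~\ref{det-limit} applied to the $LVS(d+1,((\mathbf g,\mathbf p),(\tilde g,\tilde p)))$-system on a bounded time interval and Assumption~\ref{conv_to_fixedpoint} drive the whole system into an $\varepsilon$-neighbourhood of $\mathfrak n^*((\mathbf g,\mathbf p),(\tilde g,\tilde p))$ in a time of order $1$. \emph{Phase 3 (subcritical regime).} Any phenotype whose coordinate in $\mathfrak n^*$ vanishes is, by local strict stability, governed by a subcritical multi-type branching process and dies out in time $O(\ln K)$.

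Control of the inter-mutation dynamics is obtained by noting that, conditional on no successful mutation, the resident population is driven by a stable deterministic dynamical system with small noise and therefore remains in an $\varepsilon$-neighbourhood of the equilibrium for a time at least $\exp(cK)$ with high probability (a Freidlin--Wentzell-type exit estimate), comfortably longer than $1/(Ku_K)$. Mutations happen at rate $u_K m(g)b(p)K\nu_s^K(g,p)$, so in the rescaled time the rate of production of $(\tilde g,\tilde p)$-mutants that successfully invade converges, by independence of the Poisson marks used in the construction in Section~\ref{construction}, to
\begin{equation}
\sum_{(\hat g,\hat p)\in\mathcal X_{(\mathbf g,\mathbf p)}} m(\hat g)b(\hat p)\bar{\mathfrak n}_{(\hat g,\hat p)}(\mathbf g,\mathbf p)\,M((\hat g,\hat p),(\tilde g,\tilde p))\bigl(1-q_{(\mathbf g,\mathbf p)}(\tilde g,\tilde p)\bigr),
\end{equation}
which is the rate appearing in the statement. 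Iterating via the strong Markov property at each jump yields convergence in finite-dimensional distributions on $[0,T]$ for arbitrary $T>0$.

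The main technical obstacle, compared with \cite{C_PES}, is Phase~1: while in the single-type case one invokes a scalar Galton--Watson comparison and identifies $[f]_+/b$ as the survival probability, here one has to couple the mutant subpopulation, whose phenotypes are reshuffled by the fast switching terms, with multi-type branching processes whose offspring and transition rates \emph{both} depend on the fluctuating resident profile. A careful Lipschitz-in-$\varepsilon$ control on the extinction probabilities $q$ (via the implicit function theorem applied to $\mathbf u$, using Assumption~\ref{recurrence} to guarantee irreducibility of the generator on $[\tilde p]_{\tilde g}$ and hence simplicity of its Perron eigenvalue) is needed to conclude that the two branching sandwiches have the same large-$K$ survival probability. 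Beyond this, assembling the three phases is a straightforward adaptation of \cite{C_PES}, and the only other place where phenotypic plasticity intervenes nontrivially is in Phase~3, where the subcritical multi-type comparison must be run on entire communicating classes rather than on single traits.
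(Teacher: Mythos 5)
Your proposal follows essentially the same route as the paper: a Freidlin--Wentzell exit estimate for the stable phase, a three-step invasion analysis in which the mutant class $[\tilde p]_{\tilde g}$ is sandwiched between two irreducible multi-type branching processes whose extinction vectors are controlled in $\varepsilon$ via the implicit function theorem, a deterministic transit to $\mathfrak n^*$ using Theorem \ref{det-limit} and Assumption \ref{conv_to_fixedpoint}, subcritical multi-type comparisons on whole communicating classes in the third step, and iteration by the strong Markov property. The only blemish is the displayed generator $A^{\pm}_{ij}$: its diagonal omits the loss term $-\sum_{j\neq i}\bigl(s^{\tilde g}_{\text{nat.}}(\tilde p_i,\tilde p_j)+\sum_{(g,p)}s^{\tilde g}_{\text{ind.}}(\tilde p_i,\tilde p_j)(p)\bar{\mathfrak n}_{(g,p)}\bigr)$ that the paper's $f_{(\mathbf g,\mathbf p)}(\tilde g,\tilde p_i)$ in \eqref{apparent-fitness} correctly includes, but this appears to be a transcription slip rather than a conceptual gap, since the extinction equation $\mathbf u(\mathbf y)=\mathbf 0$ you invoke carries the correct rates.
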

 \begin{remark}
\begin{enumerate}[(i)]
\setlength{\itemsep}{0.5pt}
\item The convergence cannot  hold  in law for the Skorokhod topology (cf.\ \cite{C_TSS}). It holds only in the sense of finite dimensional distributions on $\mathcal M_F(\mathcal X)$, the set of finite positive measures on $\mathcal X$ equipped with the topology of the total variation norm.
\item The process $\Lambda$ is a generalised version of the usual PES. Therefore, we call $\Lambda$  Polymorphic Evolution Sequence with phenotypic Plasticity (PESP).
\item Assumption \ref{conv_to_fixedpoint} is essential for this statement.
In the case when the dynamical system 
has multiple attractors and different points near the initial state lie in different basins of attraction, it is not clear and may be random which attractor the system approaches. 
The characterisation of the asymptotic behaviour of the dynamical system
 is needed to describe the final
state of the stochastic process. This is in general a difficult
and complex problem, which is not doable analytically and requires numerical analysis. Thus, we restrict ourselves to the  Assumption \ref{conv_to_fixedpoint}.
\end{enumerate}
\end{remark}

We describe in the following the general idea of the proof, which is quite similar to the one given in \cite{C_PES}.
The population is either in a stable phase or in an invasion phase. Until the first mutant appears the population is in a stable phase, i.e.\ the  population stays close to a given equilibrium.
From the first mutational event until the population reaches again a stable state, the population is in an invasion phase. In fact, the mutant either survives and the population reaches fast  a new stable state (where the mutant trait is present) or the mutant goes extinct and the population is again in the old stable state. After this the populations is again in a stable phase until the next mutation, etc..

Note that we prove in the following that the invasion phases are relatively short $(O(\ln(K)))$ compared to the stable phase $(O(1/u_K K))$. Since we study the process on the time scale $1/Ku_K$, the limit process proceeds as a pure jump process which jumps from one stable state to another.
\\[0.5em]
\emph{The stable phase:} Fix $\epsilon> 0$.  Let $\mathcal X_{(\mathbf g,\mathbf p)}$ be the support of the initial conditions.
For large $K$, the population process 
 $\nu^K$ is, with high probability, still in a small neighbourhood of the equilibrium $\bar{\mathfrak n}(\mathbf g,\mathbf p)$ when the first mutant appears. 
In fact, using large deviation results on the problem of exit from a domain (cf.\ \cite{F_RPoDS}), we obtain that there exists a constant $M>0$ such that the first time  $\nu^K$  leave the $M\epsilon$-neighbourhood of $\bar{\mathfrak n}(\mathbf g,\mathbf p)$  is bigger than $\exp(V K)$ for some $V > 0$ with high probability.
Thus, until this stopping time, mutations born from individuals with trait $x\in\mathcal X_{(\mathbf g,\mathbf p)}$  appear with a rate which is close to 
\be u_K m(x)b(x) K\bar{\mathfrak n}_x(\mathbf g,\mathbf p).\nonumber \ee The condition $
 1/( {Ku_K})\ll \exp (V K)$ for all $V>0$ in  (\ref{Conv_Cond})
  ensures that the first mutation appears before this exit time.
\\[0.5em]  
\emph{The invasion phase:} We divide the invasion of a given mutant trait 
$(\tilde g,\tilde p)$ into \emph{three steps}, as in \cite{C_TSS} and 
\cite{C_PES} (cf.\ Fig.\ \ref{FigPESwP}).
In the first step, from a mutational event until the mutant population goes extinct or the mutant density reaches the value $\e$, the number of mutant individuals is
small (cf.\ Fig.\ \ref{FigPESwP}, $[0,t_1]$). Thus, applying a perturbed version of the large deviation result we used in the first phase, we obtain that
 the resident population stays close to its equilibrium density $\bar{\mathfrak n}(\mathbf g,\mathbf p)$ during this step. Using similar arguments as  Champagnat et al.\ \cite{C_TSS,C_PES}, we prove that
 the mutant population  is well approximated  by a $|[\tilde p]_{\tilde g}|$-type
branching process $Z$, as long as the mutant population 
has less than $\epsilon K$ individuals. More precisely, let  us denote the elements of $[\tilde p]_{\tilde g}$ by $\tilde p_1,\ldots,\tilde p_{|[\tilde p]_{\tilde g}|} $, then,
for each  $1\leq i\leq |[\tilde p]_{\tilde g}|$, each individual in $Z$ (carrying trait $(\tilde g,\tilde p_i)$) undergoes
\begin{enumerate}[(i)]
\setlength{\itemsep}{6pt}
 \item {birth (without mutation) with rate $b(\tilde p_i)$,}
\item {death with rate $ d(\tilde p_i)  + \sum_{(g,p)\in\mathcal X_{(\mathbf g,\mathbf p)}}c(\tilde p_i, p) 
\bar{\mathfrak n}_{(g,p)}(\mathbf g,\mathbf p)$ and}
\item{ switch to  $\tilde p_j$ with rate $s_{\text{nat.}}^{\tilde g}(\tilde p_i, \tilde p_j) +\!\!\sum_{( g, p)\in\mathcal X_{(\mathbf g,\mathbf p)}}
				 \!\!s^{\tilde g}_{\text{ind.}}(\tilde p_i, \tilde p_j)( p) \bar \frakn_{ ( g, p)}$ for all $1\leq j\leq |[\tilde p]_{\tilde g}|$.}
\end{enumerate} 
This continuous-time multi-type branching process is supercritical if and only if the largest eigenvalue of its infinitesimal generator, which we denote by $\lambda_{\text{max}}$,  is larger than zero. 
Hence, 
the mutant invades with positive probability if and only if $\lambda_{\text{max}}>0$. Moreover, the probability that the density of the mutant's genotype, $\nu^K(\tilde g)$, reaches $\epsilon$ at some time $t_1$ is close to the probability that the multi-type branching process reaches the total mass $\epsilon K$, which converges as $K\uparrow \infty$ to $(1-q_{{(\mathbf g,\mathbf p)}}(\tilde g,\tilde p))$. 

In the second step, we obtain as a consequence of  Theorem \ref{det-limit} that once the mutant density has reached $\epsilon$, for large $K$, the stochastic process $\nu^K$ can be approximated on any finite time interval by the solution of $LVS(d+1,  ((g_1,p_1),\ldots ,(g_d, p_d),(\tilde g,\tilde p)))$
with a given initial state. By Assumption \ref{conv_to_fixedpoint}, this solution reaches the $\epsilon$-neighbourhood of its new equilibrium $n^*((\mathbf g,\mathbf p),(\tilde g,\tilde p))$ in finite time.  Therefore, for large $K$, the stochastic process $\nu^K$ also reaches with high probability
the $\epsilon$-neighbourhood of $n^*((\mathbf g,\mathbf p),(\tilde g,\tilde p))$ at some finite ($K$ independent) time $t_2$.

In the third step, we  use similar arguments as in the first atep. Since $n^*((\mathbf g,\mathbf p),(\tilde g,\tilde p))$ is a
strongly locally stable equilibrium (Ass. \ref{conv_to_fixedpoint}), the 
stochastic process $\nu^K_t$ stays close $n^*((\mathbf g,\mathbf p),
(\tilde g,\tilde p))$ and we can approximate the densities of the traits $
(g,p)\in\mathcal X_{((\mathbf g,\mathbf p),(\tilde g ,\tilde p))}$
 with $n_{(g,p)}^*((\mathbf g,\mathbf p),(\tilde g,\tilde p)) = 0$ by $|[p]_g|$-
 type branching 
 processes which are subcritical and therefore become extinct, a.s.. 
 \\[0.5em]
The duration of the first and third step are 
proportional to $\ln(K)$, whereas 
the time of the second step is bounded.
Thus, the second inequality in (\ref{Conv_Cond}) guarantees that, with 
high probability, 
the three steps of invasion are completed before a new mutation occurs.
 After the last step the process is again back in a  stable phase, but with a 
 possibly different resident population, until the next mutation happens.
 
 \begin{figure}[h!]
	\centering
	 \includegraphics[width=.8\textwidth]{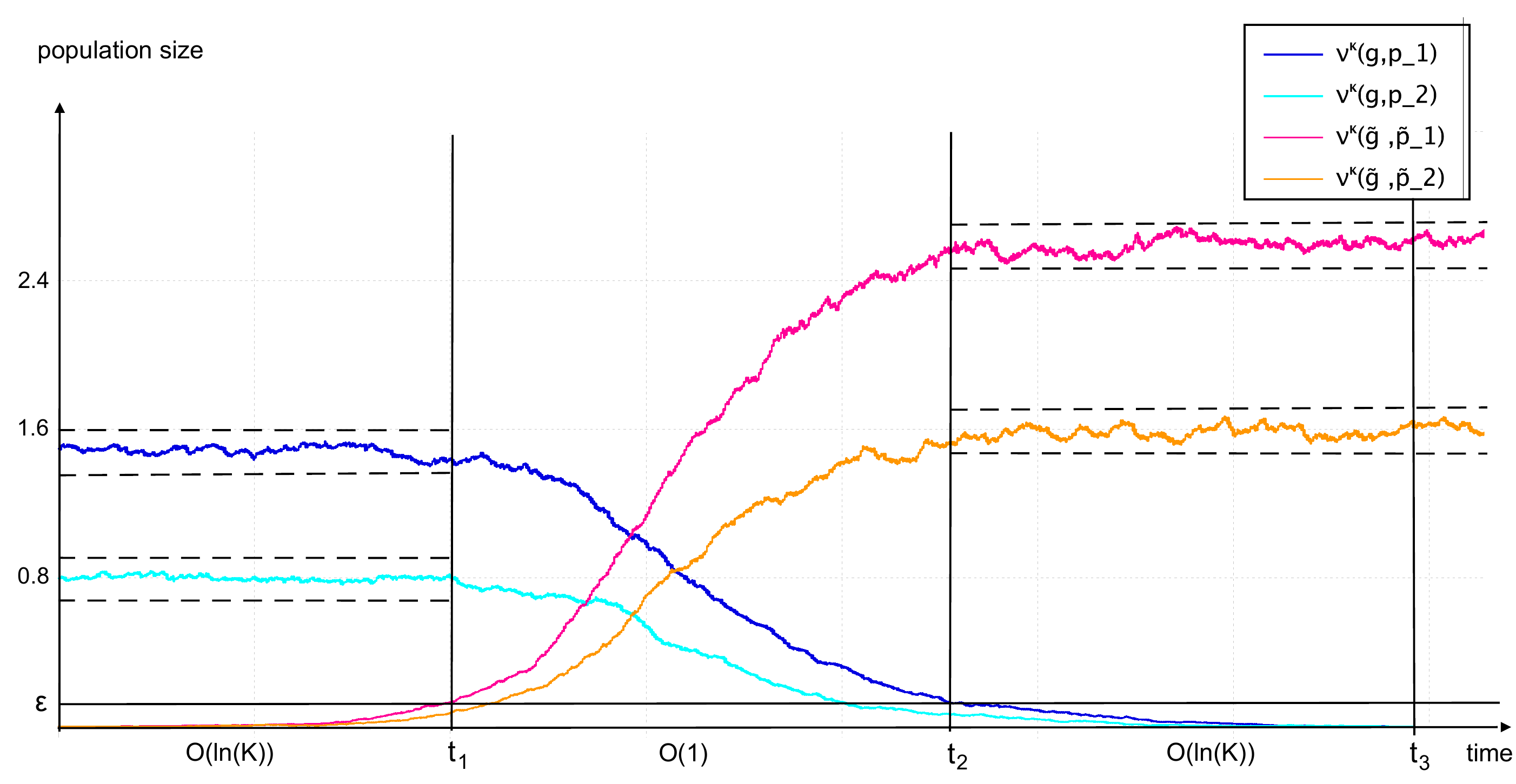}	
	\caption{\small {The three steps of one invasion phase.} }\label{FigPESwP}
\end{figure}
\normalsize
\textit{An example:} Figure \ref{FigPESwP} shows the invasion phase of  a single mutant with trait $(\tilde g, \tilde p_1)$, which appeared (at time $0$) in a population close to $\bar n(\mathbf g, \mathbf p)$ (indicated by the dashed lines).  In this example the resident population consists of two coexisting traits $(g,p_1)$ and $(g,p_2)$ and the mutant individuals can switch  to one other phenotype only, i.e.\  $[\tilde p_1]_{\tilde g}=\{\tilde p_1,\tilde p_2\}$. 
The parameters  of the simulation of Figure \ref{FigPESwP} are given in Table \ref{Table_PESwP}.
\begin{table}[h]
\centering\footnotesize
\begin{tabular}{|@{\hspace{ 0.15cm}}l@{\hspace{ 0.15cm}} |@{\hspace{ 0.15cm}}l @{\hspace{ 0.15cm}} |@{\hspace{ 0.1cm}}l@{\hspace{ 0.1cm}}|@{\hspace{ 0.1cm}}l@{\hspace{ 0.1cm}}| @{\hspace{ 0.1cm}}l @{\hspace{ 0.1cm}}|@{\hspace{ 0.1cm}}l @{\hspace{ 0.1cm}}|@{\hspace{ 0.1cm}}l@{\hspace{ 0.1cm}}|}\hline
$b(p_1)=3	  $ &$ d({p_1})=1	$ &$c({p_1},{p_1})=1	  $&$ c({p_1}, p_2)=0.7$&$ c({p_1},\tilde p_1)=0.7$&$ c(p_1,\tilde p_2)=0.7$&$s_{\text{nat.}}^{ g}( p_1,p_2)=1$ \\[0.1em]\hline
$b(p_2)=3	  $ &$ d({p_2})=1	$ &$c({p_2},{p_1})=0.7	  $&$ c({p_2}, p_2)=1$&$ c({p_2},\tilde p_1)=0.7$&$ c(p_2,\tilde p_2)=0.7$&$s^{ g}_{\text{nat.}}( p_2,p_1)=2$ \\[0.1em]\hline
$b(\tilde p_1)=5	  $ &$ d(\tilde p_1)=1	$ &$c(\tilde p_1,{p_1})=0.7	  $&$ c(\tilde p_1, p_2)=0.7$&$ c(\tilde p_1,\tilde p_1)=1$&$ c(\tilde p_1,\tilde p_2)=0.7$&$s^{\tilde g}_{\text{nat.}}(\tilde  p_1,\tilde p_2)=1.5$ \\[0.1em]\hline
$b(\tilde p_2)=4	  $ &$ d({\tilde p_2})=1	$ &$c({\tilde p_2},{p_1})=0.7	  $&$ c({\tilde p_2}, p_2)=0.7$&$ c({\tilde p_2},\tilde p_1)=0.7$&$ c(\tilde p_2,\tilde p_2)=1$&$s^{\tilde g}_{\text{nat.}}(\tilde  p_2,\tilde p_1)=2$ \\[0.1em]\hline\hline
$K=2000 $ &$u_K=0	$ &$\nu_0^K(g,p_1)=1.5 $& $\nu_0^K(g,p_2)=0.8 $& $\nu_0^K(\tilde g,\tilde p_1)=1/K $& $\nu_0^K(\tilde g,\tilde p_2)=0$&{$s_{\text{ind.}}^{.} (\: .\:,\:.\:)(.)\equiv 0 $}\\[0.1em]\hline
\end{tabular}
\caption{\small Parameters of Figure \ref{FigPESwP} }\label{Table_PESwP}
\end{table}
The stable fixed point of the system $LVS(2, ((g,p_1),(g,p_2)))$ is $\bar n ((g,p_1),(g,p_2))\approx 
(1.507, 0.809 )$. The infinitesimal generator  of the multi-type branching process that approximates the 
mutant population in the first step is approximately 
\be\lb\begin{matrix}
0.879 &1.5\\
2 & -0.621
\end{matrix}\rb.
\ee 
Since the largest eigenvalue of this matrix  is positive ($\approx 2.016$), the mutant population reaches with positive probability the second invasion step (cf.\ Fig.\  \ref{FigPESwP}). \\
Moreover, $n^*\approx (0,0, 2.608, 1.608)$ is  the unique locally strictly stable fixed point of the dynamical system $LVS(4, ((g,p_1),(g,p_2),(\tilde g,\tilde p_1),(\tilde g,\tilde p_2)))$.
The dynamical system and hence also the stochastic process reach in finite time the $\e$-neighbourhood of this value. 
The infinitesimal generator of the multi-type branching process that approximates the resident population in the third step is approximately 
\be\lb\begin{matrix}
-1.951 &2\\
1 & -2.951
\end{matrix}\rb.
\ee 
The largest eigenvalue of this matrix  is negative ($\approx -0.951$) meaning that the process is subcritical and goes extinct a.s.. Therefore, there exists a time $t_3$ such that all individuals which carry trait $(g,p_1)$ or $(g,p_2)$ are a.s.\ dead at time $t_3$. 
\subsection{Proof of Theorem \ref{PESwP}.}\label{PES-prove}
 In this paragraph we prove the convergence to the PESP. (The proof uses the same arguments and techniques as \cite{C_PES}, which were developed in \cite{C_TSS}. However, some extensions are necessary, if fast phenotypic switches are included in the process, which we state and prove in this subsection.) We start with an analog  of Theorem 3 of \cite{C_TSS}.
Part (i) of the following theorem strengthens Theorem \ref{det-limit}, and part (ii) provides control  of exit from an attractive domain in the polymorphic case with phenotypic plasticity.

\begin{theorem}\label{exit_domain_prob}
\begin{enumerate}[(i)]
\setlength{\itemsep}{0,5pt}
\item Assume that the initial conditions have support $\{(g_1,p_1),\ldots (g_d,p_d)\}$  and are uniformly bounded, i.e.\ , for all $1\leq i\leq d$, $\nu_0^K(g_i,p_i)\in A$, where $A$ is a compact subset of $\mathbb R_{>0}$.  
 Then, for all $T>0$
\be\label{LLN_UIC}
\lim_{K\uparrow \infty} \:\sup_{ t\in [0,T]}  
\:\Big|\Big|\nu^K_t-\sum_{x\in \mathcal X_{(\mathbf g,\mathbf p)}} \mathfrak n_x(t, \nu^K_0)\delta_{x}\Big|\Big|^{}_{TV}=0 \quad\text{a.s.,}
\ee
where $\mathfrak n(t, \nu^K_0)\in \mathbb R^{| \mathcal X_{(\mathbf g,\mathbf p)}|}$ denotes the value of the solution of $LVS(d,  (\mathbf g,\mathbf p))$  at time $t$ with initial condition $\mathfrak n_{ x}(0,\nu^K_0)=\nu^K_0( x)$ for all $x\in \mathcal X_{(\mathbf g,\mathbf p)}$.
Note that the measure $\sum_{x\in \mathcal X_{(\mathbf g,\mathbf p)}} \mathfrak n_x(t, \nu^K_0)\delta_{x}$ depends on $K$, since the initial condition and hence the solution of $LVS(d,  (\mathbf g,\mathbf p))$ depends on $K$.

\item Let $(g_1,p_1),\ldots, (g_d,p_d)\in\mathcal X$ coexist. Assume that, for any $K\geq 1$,
$ \text{Supp}(\nu_0^K)= \mathcal X_{(\mathbf g,\mathbf p)}$. Let $\tau^{}_{\text{mut.}}$ be the first mutation time. Define the first exit time from the
 $\xi$-neighbourhood of $\bar{\mathfrak n}_{x}(\mathbf g,\mathbf p)$ by
\be \theta^{K, \xi}_{\text{exit}}\equiv\inf\left\{t\geq 0: \exists x\in\mathcal X_{(\mathbf g,\mathbf p)} : \left|\nu_t^K(x)-\bar{\mathfrak n}_{x}(\mathbf g,\mathbf p)\right| >\xi\right\}. 
\ee
Then there exist $\epsilon_0>0$ and $M>0$ such that, for all $\epsilon<\epsilon_0$, there exists  $V>0$ such that if the initial state of $\nu^K$ lies in  the $\epsilon$-neighbourhood of $\bar{\mathfrak n}_{x}(\mathbf g,\mathbf p)$, the probability that
$ \theta^{K, M\epsilon}_{\text{exit}}$ is larger than $\eee^{KV}\wedge \tau^{}_{\text{mut.}}$  converges to one, i.e.\
\be\label{exit_domain}\quad
\lim_{K\uparrow  \infty}\: \sup_{\mathbf{n}^K\in(\mathbb N/K)^{|\mathcal X_{(\mathbf g,\mathbf p)}|}  \cap B_{\epsilon}(\bar{\mathfrak n}(\mathbf g,\mathbf p) )}
\mathbb P\left [  \theta^{K, M\epsilon}_{\text{exit}} \!<\eee^{KV}\wedge \tau^{}_{\text{mut.}}\: \Big|\: \nu^K_0(x)= n^K_x \text{ for all } x\in \mathcal X_{(\mathbf g,\mathbf p)}\right]=0,
\ee
where $\mathbf{n}^K\equiv (n^K_x)_{x\in\mathcal X_{(\mathbf g,\mathbf p)}}$ and $B_{\epsilon}(\bar{\mathfrak n}(\mathbf g,\mathbf p) )$ denotes the $\epsilon$-neighbourhood of $\bar{\mathfrak n}(\mathbf g,\mathbf p)$.

Moreover, (\ref{exit_domain}) also holds if, for all $(g,p)\in\mathcal X_{(\mathbf g,\mathbf p)}$, the total death rate of an individual with trait $(g,p)$, 
\be
d(p)+\sum_{(\tilde g,\tilde p)\in\mathcal X_{(\mathbf g,\mathbf p)}}c(p,\tilde p)\nu^K_t(\tilde g,\tilde p),
\ee
and the total switch rates of an individual with trait $(g,p)$, 
\be
s^g_{\text{nat.}}(p, p_i)+\sum_{(\tilde g,\tilde p)\in\mathcal X_{(\mathbf g,\mathbf p)}}s^g_{\text{ind.}}(p, p_i)(\tilde p)\nu^K_t(\tilde g,\tilde p)\quad \text{for all }p_i\in[p]_g,
\ee
are perturbed by additional random processes that are uniformly bounded by $\bar c \epsilon$ respectively $\bar s_{\text ind.}  \epsilon$, where  $\bar c $ and $ \bar s_{\text ind.} $ are upper bounds for the parameters of competition and induced switch.
\end{enumerate}
\end{theorem}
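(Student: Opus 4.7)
The plan for part (i) is to apply the standard density-dependent law-of-large-numbers argument underlying Theorem \ref{det-limit} with care for support and uniformity in the initial condition. Since the statement concerns pre-mutation dynamics (its use in part (ii) is bounded by $\tau^{}_{\text{mut.}}$), we may take $u_K \equiv 0$: no new genotype appears, births preserve the trait, and switches move within $[p]_g$, so Assumption \ref{recurrence} pins the support to $\mathcal X_{(\mathbf g, \mathbf p)}$. Expressing $\nu^K$ in coordinates on $\mathbb R^{|\mathcal X_{(\mathbf g, \mathbf p)}|}$ via Remark \ref{TraitSpace}(i), one reads off from the Poisson construction of Section \ref{construction} a semimartingale decomposition whose drift is the vector field $F$ generating $LVS(d, (\mathbf g, \mathbf p))$ and whose martingale part has predictable quadratic variation of order $1/K$ on bounded sets. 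Doob's $L^2$ inequality, a Borel--Cantelli argument along a mesh in $[0,T]$, and the local Lipschitz property of $F$ on bounded sets then yield, via Gronwall, uniform convergence on $[0,T]$ towards $\mathfrak n(\cdot, \nu^K_0)$; uniformity over the compact set $A$ of initial values follows from continuous dependence of $LVS$ solutions on initial data.

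Part (ii) is the classical Freidlin--Wentzell exit bound from a basin of attraction, adapted to density-dependent jump Markov processes as in Champagnat \cite{C_TSS}, Theorem 3. Since the traits coexist, the Jacobian of $F$ at $\bar{\mathfrak n}(\mathbf g, \mathbf p)$ is Hurwitz, and a quadratic Lyapunov function $W$ exists on an $\epsilon_0$-neighborhood of this equilibrium with $\nabla W \cdot F \leq -\kappa W$ there. Choose $M$ so that the $M\epsilon$-ball is contained in a sublevel set $\{W < c\}$ lying strictly inside the basin of attraction. The argument then alternates two phases: a \emph{deterministic} phase in which, starting from the $\epsilon$-ball, part (i) combined with Gronwall forces the stochastic process into an $\epsilon/2$-ball within a bounded time with probability $1 - o(1)$; and a \emph{stochastic} phase in which the large-deviation principle of \cite{F_RPoDS} (Chapter 5) for density-dependent Markov chains produces a strictly positive quasi-potential $V$ on the boundary of the $M\epsilon$-ball, so that the probability of leaving this ball in time $e^{KV}$ starting from within the $\epsilon/2$-ball is exponentially small. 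Concatenating $O(e^{KV/2})$ such cycles covers the window $[0, e^{KV} \wedge \tau^{}_{\text{mut.}}]$; the $\tau^{}_{\text{mut.}}$ restriction only multiplies birth rates by $(1 - u_K m(g)) \to 1$ and is absorbed into the $o(1)$ error.

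For the perturbed statement, the additive $O(\epsilon)$ perturbations of the death and switch rates remain Lipschitz in $\nu^K$, so they shift both the deterministic equilibrium and the quasi-potential by $O(\epsilon)$ without destroying the Hurwitz property for $\epsilon < \epsilon_0$ small enough; the quasi-potential stays uniformly positive on the boundary of the $M\epsilon$-ball, giving the same exponential bound with a possibly smaller $V$. The uniform boundedness by $\bar c \epsilon$ (rather than a mere $o(1)$) is used here to keep the perturbation of the rate functional $O(\epsilon)$ uniformly in $K$.

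The main obstacle is the quasi-potential estimate: the switching terms (especially the density-dependent \emph{induced} switches) introduce genuinely non-Lotka--Volterra, quadratic couplings in the rate functions, so one cannot quote the Champagnat--M\'el\'eard results verbatim. One must verify that the Freidlin--Wentzell rate functional built from the jump measures associated to $\mathscr L^K$ is well defined, strictly convex at zero, and continuous in the parameters, so that the resulting quasi-potential is strictly positive on the boundary of the $M\epsilon$-ball and remains so under bounded perturbations. Once these ingredients are in place, the two-phase argument goes through as in \cite{C_TSS}.
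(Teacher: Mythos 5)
Your overall strategy is sound and, for part (ii), essentially identical to the paper's: the paper also reduces the exit estimate to the Freidlin--Wentzell theory for dynamical systems with small jump-like perturbations (\cite{F_RPoDS}, Sec.~5.4), and it likewise leaves the verification that the rate functional accommodates the quadratic, non-Lotka--Volterra switching terms at the level of a remark rather than a detailed computation; your explicit flagging of this as the main obstacle, and your treatment of the $O(\epsilon)$-bounded perturbations as a uniform shift of the quasi-potential, match the intended argument. For part (i) you diverge: the paper does \emph{not} run the Ethier--Kurtz martingale/Gronwall scheme directly, but instead establishes a full large deviation principle on $[0,T]$ for a slightly modified process (via Theorem 10.2.6 of \cite{D_LD}) and deduces \eqref{LLN_UIC} from it. The LDP route is heavier but buys two things at once: the exponentially small tail bounds make the almost-sure (rather than in-probability) uniform convergence immediate by Borel--Cantelli, and the same rate functional is then reused for the exit-time estimate in part (ii). Your more elementary route is fine in substance, but note that Doob's $L^2$ inequality alone gives error probabilities of order $1/K$, which are not summable; to upgrade to the a.s.\ statement in \eqref{LLN_UIC} you should either invoke the exponential martingale inequality (or higher moments) to get summable tails, or argue as in \cite{E_MP}, Chap.~11, via the strong law for the driving Poisson processes in the time-changed representation of Section \ref{construction}. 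With that repair, and with the uniformity over the compact set $A$ handled by continuous dependence on initial data as you indicate, your argument delivers the same conclusion.
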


\begin{remark}\begin{enumerate}[(i)]
\setlength{\itemsep}{0,5pt}
\item{
One consequence of the second part of (ii) is that, with high probability, the process stays in the $M\epsilon$-neighbourhood of $\bar{\mathfrak n}_{x}(\mathbf g,\mathbf p)$ until the first time that a mutant's density reaches the value $\epsilon$.
In other words, let $\theta^{K}_{\text{Invasion}}$ denote the first time that a mutant's density reaches the value $\epsilon$, i.e 
\be
\theta^{K}_{\text{Invasion}}\equiv \left\{\:t\geq 0: \exists (g,p)\notin \mathcal X_{(\mathbf g,\mathbf p)}: \textstyle\sum_{\tilde p \in [p]_g} \nu^K_t(g,\tilde p)\geq \epsilon \:\right\}.
\ee
Then, the probability that
$\theta^{K, M\epsilon}_{\text{exit}}$ is larger than $\eee^{KV}\wedge \theta^{K}_{\text{Invasion}}$  converges to one.
We use this result also for the third invasion step.}
\item{Since $\bar{\mathfrak n}(\mathbf g,\mathbf p) $ is a locally strictly stable fixed point of the  system $LVS(d,  (\mathbf g,\mathbf p))$, there exists a constant $M>0$ such that, for all $\epsilon>0$ small enough,  
for all trajectories $\mathfrak n(t)$ with 
$||\mathfrak n(0)  -\bar{\mathfrak n}(\mathbf g,\mathbf p)||<\epsilon$, it holds
that $\sup_{t\geq0}||\mathfrak n(t)-\bar{\mathfrak n}(\mathbf g,\mathbf p)|| < M \epsilon$. }
\end{enumerate}
\end{remark}

\begin{proof}
The main task to prove (i) is to show that a large deviation principle on $[0,T]$ holds for a sightly modify process and that the $\nu^K$ has the same law on the random time interval we need to control it. 
In fact, Theorem 10.2.6 of \cite{D_LD} can be applied to obtain the large deviation principle.
The main task to prove (ii) is to show that  the classical estimates for  exit 
times  from a domain (cf.\ \cite{F_RPoDS}) for the jump process $\nu^K$ 
can be used. Note that Freidlin and Wentzell study in \cite{F_RPoDS} 
mainly small white noise perturbations of dynamical systems. However, 
there also are some comments on the generalisation to dynamical 
systems with small jump-like perturbations (cf.\ \cite{F_RPoDS}, Sec. 
5.4).
\end{proof}

The following Lemma describes the asymptotic behaviour of 
$ \tau^{}_{\text{mut.}}$ and can be seen as an extension of Lemma 2  of \cite{C_TSS} or Lemma A.3 of \cite{C_PES}. 
\begin{lemma} Let $(g_1,p_1),..., (g_d,p_d)\in\mathcal X$ coexist. Assume that, for any $K\geq 1$,
$ \text{Supp}(\nu_0^K)= \mathcal X_{(\mathbf g,\mathbf p)}$. Let $\tau^{}_{\text{mut.}}$ denote the first mutation time. Then, there exists $\epsilon_0>0$ such that if the initial states of $\nu^K$ belong to the $\epsilon_0$-neighbourhood of $\bar{\mathfrak n}_{x}(\mathbf g,\mathbf p)$, then,  for all $\epsilon\in(0,\epsilon_0)$, 
\be\label{blablabla}
\lim_{K\uparrow  \infty}\:
\mathbb P\left [   \tau^{}_{\text{mut.}}>\ln (K), \sup_{ t\in[\ln (K),\tau^{}_{\text{mut.}} ]}\:
\:\Big|\Big|\nu^K_t-\textstyle\sum_{x\in \mathcal X_{(\mathbf g,\mathbf p)}} \bar {\mathfrak n}_x(\mathbf g,\mathbf p)\delta_{x}\Big|\Big|^{}_{TV} <\epsilon \right]=1,
\ee
Moreover, $(\tau^{}_{\text{mut.}}u_K K)_{K\geq 1}$ converges in law 
to an exponential distributed random variable with parameter $\sum_{(g,p)\in \mathcal X_{(\mathbf g,\mathbf p)}}m(g)b(p)\bar {\mathfrak n}_{(g,p)}(\mathbf g,\mathbf p)$ and
the probability that the mutant, which appears at time $\tau^{}_{\text{mut.}}$, is born from an individual with trait $(g,p)\in \mathcal X_{(\mathbf g,\mathbf p)}$ converges  to
\be
\frac{m(g)b(p)\bar {\mathfrak n}_{(g,p)}(\mathbf g,\mathbf p)}
{\sum_{(\tilde g,\tilde p)\in \mathcal X_{(\mathbf g,\mathbf p)}}m(\tilde g)b(\tilde p)\bar {\mathfrak n}_{(\tilde g,\tilde p)}(\mathbf g,\mathbf p)}
\ee
as $K\uparrow  \infty$.
\end{lemma}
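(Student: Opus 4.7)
The strategy mirrors Lemma A.3 of \cite{C_PES}: establish pathwise control of $\nu^K$ near $\bar{\mathfrak n}(\mathbf g,\mathbf p)$ before the first mutation, and extract the asymptotic law of $\tau^{}_{\text{mut.}}$ from the compensator of the mutation counting process. First I would use that, since $(g_1,p_1),\ldots,(g_d,p_d)$ coexist, $\bar{\mathfrak n}(\mathbf g,\mathbf p)$ is locally exponentially stable, so there is a deterministic $T_\epsilon>0$ (independent of $K$) such that every $LVS(d,(\mathbf g,\mathbf p))$-solution starting in the $\epsilon_0$-neighbourhood enters the $\epsilon/2$-neighbourhood of $\bar{\mathfrak n}(\mathbf g,\mathbf p)$ by time $T_\epsilon$. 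By Theorem \ref{exit_domain_prob}(i), $\nu^K_{T_\epsilon}$ lies in the $\epsilon$-neighbourhood with probability tending to one. Applying Theorem \ref{exit_domain_prob}(ii) from this state, there exist $M,V>0$ such that the exit time $\theta^{K,M\epsilon}_{\text{exit}}$ exceeds $\eee^{KV}\wedge \tau^{}_{\text{mut.}}$ with probability $\to 1$. Since $T_\epsilon<\ln(K)$ for $K$ large, this delivers the trajectory statement of \eqref{blablabla} once one also knows $\tau^{}_{\text{mut.}}>\ln(K)$ w.h.p., which is settled in the next step.

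Next I would compute the law of $\tau^{}_{\text{mut.}}$ via a Cox-type compensator argument. Conditionally on the path of $\nu^K$, the mutation events form an inhomogeneous Poisson process with intensity
\be
\lambda^K_s \:=\: u_K K \sum_{(g,p)\in \mathcal X_{(\mathbf g,\mathbf p)}} m(g)\,b(p)\,\nu^K_s(g,p),
\ee
so that
\be
\mathbb P\!\left[\tau^{}_{\text{mut.}}\, u_K K>t\right]
\:=\:\mathbb E\!\left[\exp\!\left(-\int_0^{t/(u_K K)} \lambda^K_s\,ds\right)\right].
\ee
Setting $R:=\sum_{(g,p)\in\mathcal X_{(\mathbf g,\mathbf p)}} m(g)b(p)\bar{\mathfrak n}_{(g,p)}(\mathbf g,\mathbf p)$, condition \eqref{Conv_Cond} gives $1/(u_K K)\ll \eee^{VK}$; hence on the trajectory event of the previous paragraph, $|\lambda^K_s - u_K K R|\leq C\epsilon\, u_K K$ for $s\in [T_\epsilon, t/(u_K K)]$, while the integrated intensity on $[0,T_\epsilon]$ is $O(u_K T_\epsilon)=o(1)$. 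Letting $K\uparrow\infty$ and then $\epsilon\downarrow 0$ gives $\mathbb P[\tau^{}_{\text{mut.}} u_K K>t]\to\eee^{-Rt}$, which is the claimed exponential limit. Since $u_K K\ln(K)\to 0$, this in turn yields $\mathbb P[\tau^{}_{\text{mut.}}>\ln(K)]\to 1$, closing the trajectory statement of Step~1.

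For the identity of the parent trait I would apply the same thinning to the joint mutation point process: the probability that the mutant appearing at $\tau^{}_{\text{mut.}}$ is born from a parent of trait $(g,p)$ equals
\be
\mathbb E\!\left[\frac{m(g)b(p)\,\nu^K_{\tau^{}_{\text{mut.}}-}(g,p)}{\sum_{(\tilde g,\tilde p)\in \mathcal X_{(\mathbf g,\mathbf p)}} m(\tilde g)b(\tilde p)\,\nu^K_{\tau^{}_{\text{mut.}}-}(\tilde g,\tilde p)}\right],
\ee
which, on the trajectory event, is within $O(\epsilon)$ of the target ratio $m(g)b(p)\bar{\mathfrak n}_{(g,p)}(\mathbf g,\mathbf p)/R$, and is trivially bounded by $1$ off that event, so the expectation converges to the announced ratio. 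The main technical hurdle is to justify the Cox/compensator identity simultaneously with the trajectory-control event when $\tau^{}_{\text{mut.}}$ is itself random; following \cite{C_PES}, I would handle this by stopping the process at the deterministic horizon $t/(u_K K)$ and using \eqref{Conv_Cond} to ensure this horizon is dominated by the deterministic exit bound $\eee^{KV}$ from Theorem \ref{exit_domain_prob}(ii), so that any discrepancy is absorbed in a probability tending to zero.
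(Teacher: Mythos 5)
Your proposal is correct and follows essentially the same route as the paper: asymptotic stability of $\bar{\mathfrak n}(\mathbf g,\mathbf p)$ together with Theorem \ref{exit_domain_prob} (i) and (ii) yields the trajectory control up to $\eee^{KV}\wedge\tau^{}_{\text{mut.}}$, and the exponential limit and parent-trait probability follow from comparing the total mutation intensity with $u_K K\sum_{(g,p)}m(g)b(p)\bar{\mathfrak n}_{(g,p)}(\mathbf g,\mathbf p)$. The only (immaterial) difference is that where you invoke the Cox-process exponential formula for the first mutation time, the paper sandwiches $\tau^{}_{\text{mut.}}$ between two explicitly constructed exponential random variables $A^{1,K,\epsilon}\leq\tau^{}_{\text{mut.}}\leq A^{2,K,\epsilon}$ with parameters $u_K K\sum_{(g,p)}m(g)b(p)\bigl(\bar{\mathfrak n}_{(g,p)}(\mathbf g,\mathbf p)\pm\epsilon\bigr)$ on the good event, which amounts to the same rate comparison.
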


\begin{proof}
There exist constants $C>0$ and $V>0$, such that on the time interval $[0,\exp(KV)]$ the total mass of the population, $\nu_t^K(\mathcal X)$, is bounded from above by $C$. 
 Therefore, we can construct an exponential random variable $A$ with parameter $C' K u_K$, where $C'=C \max_{g\in\mathcal G, p\in\mathcal P} m(g) b(p)$, such that 
 \be
  A\leq \tau^{}_{\text{mut.}}\qquad
 \text { on the event }\quad\left\{\tau^{}_{\text{mut.}}< \exp(KV)\right\}.
 \ee Thus, $\mathbb P\left[\tau^{}_{\text{mut.}}>\ln(K)\right]\geq \mathbb P\left[A>\ln(K)\right]=\eee^{-C' \ln(K)K u_K}$. Since $(\ref{Conv_Cond})$ implies that $\ln(K)Ku_K $ converges to zero as $K\uparrow \infty$, we get $\lim_{K\uparrow \infty}\mathbb P[\tau^{}_{\text{mut.}}>\ln(K)]=1$.
 
  The fixed point $\bar {\mathfrak n}(\mathbf g,\mathbf p)$ is asymptotic stable. Thus,
$\exists \epsilon_0 > 0: \forall \tilde{\epsilon}\in(0,\epsilon_0)\:\exists T(\tilde{\epsilon})$:
\be
\| \mathfrak n(\mathbf g,\mathbf p)(0) - \bar {\mathfrak n}(\mathbf g,\mathbf p)\|<\epsilon_0, \qquad \text { implies } \quad
\sup_{ t\geq T(\tilde{\epsilon})}|\mathfrak n(\mathbf g,\mathbf p)(t)  - \bar {\mathfrak n}(\mathbf g,\mathbf p)|<\tilde{\epsilon}/2. \ee
In words, there exists a finite time $T(\tilde{\epsilon})$ such that all trajectories, which start in the $\epsilon_0$ neighbourhood of the fixed point, stay after $T(\tilde{\epsilon})$ in the  $\tilde{\epsilon}/2$-neighbourhood of the fixed point. 

Next, we apply the last theorem: By (i), for all $\tilde{\epsilon}\in(0,\epsilon_0) \:\exists T(\tilde{\epsilon})$ such that, for $K$ large enough,  
\be
\label{tilde_epsilon} \Big|\Big|\nu^K_{T(\tilde{\epsilon})}- \textstyle\sum_{x\in \mathcal X_{(\mathbf g,\mathbf p)}} \bar {\mathfrak n}_x(\mathbf g,\mathbf p)\delta_{x}\Big|\Big|^{}_{TV}<\tilde{\epsilon} \quad a.s..
\ee
 Then, by (ii), there exist $ \e_0>0$ and $ M>0$: for all $\tilde\epsilon\in(0,\epsilon_0)$ there exists  $V>0$ such that
 \be
 \lim_{K\uparrow \infty} \mathbb P\left[\sup_{t\in[ T(\tilde \e),\: \eee^{KV}\!\wedge \tau^{}_{\text{mut.}})}\:\Big|\Big|\nu^K_t-\textstyle\sum_{x\in \mathcal X_{(\mathbf g,\mathbf p)}} \bar {\mathfrak n}_x(\mathbf g,\mathbf p)\delta_{x}\Big|\Big|^{}_{TV}<M\tilde \epsilon\right]=1.
 \ee
Moreover,  for all $\tilde\epsilon\in(0,\epsilon_0)$  there exists $K_0\in \mathbb N$ such that $T(\tilde\epsilon)<\ln(K)$ for all $K\geq K_0$. Thus, setting $\epsilon=M\tilde \e$, ends the proof of (\ref{blablabla}), provided that $\lim_{K\uparrow \infty}\P[\tau^{}_{\text{mut.}}<\eee^{KV} ]=1$.

Again,  we can construct for all $\epsilon>0$ two exponential random variables $A^{1,K,\epsilon}$ and $A^{2,K,\epsilon}$ with parameters 
\be
a_1u_K K\equiv\sum_{(g,p)\in \mathcal X_{(\mathbf g,\mathbf p)}}u_K m(g)b(p)(\bar {\mathfrak n}_{(g,p)}(\mathbf g,\mathbf p)+\epsilon)K 
\ee
and
\be a_2u_K K\equiv\sum_{(g,p)\in \mathcal X_{(\mathbf g,\mathbf p)}}u_Km(g)b(p)(\bar {\mathfrak n}_{(g,p)}(\mathbf g,\mathbf p)-\epsilon)K
\ee 
such that 
\be 
A^{1,K,\epsilon}\leq \tau^{}_{\text{mut.}}\leq A^{2,K,\epsilon}\quad
\qquad \text{on the event } \{T(\tilde\epsilon)<\tau^{}_{\text{mut.}}<\eee^{KV}\},
 \ee 
  where $T(\tilde\epsilon)$ is the time defined  in equation (\ref{tilde_epsilon}) and $\tilde\e=\e/M$.
Moreover, we have 
 \be\lim_{K\uparrow \infty} \mathbb P[\tau^{}_{\text{mut.}}< \ln(K)]=0\qquad \text{and}\qquad
  \lim_{K\uparrow \infty} \mathbb P[A^{2,K,\epsilon}>\eee^{KV}]=0,
 \ee 
 because  $u_K K \eee^{KV}\uparrow \infty$ as $K\uparrow \infty$. Therefore, for all $\epsilon>0$,
 the probability of the event $\{T(\tilde\epsilon)<\tau^{}_{\text{mut.}}< \eee^{KV}\}$ converges to one as $K$ goes to infinity. Moreover, the random variables $A^{1,K,\epsilon}u_K K$ and $A^{2,K,\epsilon}u_K K$ converge both in law to the same exponential distributed random variable with parameter 
 \be
 \sum_{(g,p)\in \mathcal X_{(\mathbf g,\mathbf p)}}m(g)b(p)\bar {\mathfrak n}_{(g,p)}(\mathbf g,\mathbf p)
 \ee as first $K\uparrow  \infty$ and then $\epsilon\to0$.
The random variables  $A$, $A^{1,K,\epsilon}$ and $A^{2,K,\epsilon}$ can easily be constructed by using the pathwise description of $\nu^K$  (cf.\ \cite{BBC2015} or \cite{ChaJabMel}).
\end{proof}
\begin{theorem}[The three steps of invasion]\label{The three steps of invasion}
 Let $(g_1,p_1),\ldots, (g_d,p_d)\in\mathcal X$ coexist. Assume that, for any $K\geq 1$,
$ \text{Supp}(\nu_0^K)= \mathcal X_{(\mathbf g,\mathbf p)}\cup\{(\tilde g,\tilde p)\}$. Let $\tau^{}_{\text{mut.}}$ denote the next mutation time (after time zero) and define
\bea\theta^{K,\xi}_{\text{No Jump}}&\equiv&
\inf\Big\{t\geq 0: \nu^K_t(\tilde g)=0 \;\text{ and }  \; \Big|\Big|\nu_t^K-\textstyle\sum_{ x\in\mathcal X_{(\mathbf g,\mathbf p)}}\bar{\mathfrak n}_{x}(\mathbf g,\mathbf p)\delta_{x}\Big|\Big|_{TV} <\xi\Big\}\\
 \theta^{K,\xi}_{\text{Jump}}&\equiv&
\inf\Big\{t\geq 0: \Big|\Big|\nu_t^K-\textstyle\sum_{x\in\mathcal X_{((\mathbf g,\mathbf p),(\tilde g,\tilde p))}} \mathfrak n^*_{x} ((\mathbf g,\mathbf p),(\tilde g,\tilde p)) \delta_{x}\Big|\Big|_{TV} <\xi
\\ && \nonumber \hspace{2cm} \text{ and }\;\forall \hat x
\notin\{x\in\mathcal X: \mathfrak n^*_{x} ((\mathbf g,\mathbf p),(\tilde g,\tilde p))>0\} :\: \nu^K_t(\hat x)=0 \Big\}.
\eea
Assume that we have a single initial mutant, i.e.\ $\nu^K_0(\tilde g,\tilde p)=1/K$. Then, there exist $\epsilon_0>0, C>0,$  and $M>0$ such that 
for all $\epsilon\in(0,\epsilon_0)$ if $||\nu_0^K-\textstyle\sum_{ x\in\mathcal X_{(\mathbf g,\mathbf p)}}\bar{\mathfrak n}_{x}(\mathbf g,\mathbf p)\delta_{x}||_{TV} <\epsilon$,
\bea
\label{Jump_pro}\lim_{K\uparrow \infty}\mathbb P\left[ \theta^{K, M\epsilon}_{\text{No Jump}}< \theta^{K, M \epsilon}_{\text{Jump}}\right]&\geq& q_{{(\mathbf g,\mathbf p)}}(\tilde g,\tilde p)-C\e,\\
\lim_{K\uparrow \infty}\mathbb P\left[ \theta^{K, M\epsilon}_{\text{Jump}}< \theta^{K, M\epsilon}_{\text{No Jump}}\right]&\geq& 1-q_{{(\mathbf g,\mathbf p)}}(\tilde g,\tilde p)- C \e,
\eea
where $1- q_{{(\mathbf g,\mathbf p)}}(\tilde g,\tilde p)$ is the invasion probability defined in (\ref{invasion probability}) and 
\be
\forall \eta>0,\quad \lim_{K\uparrow \infty}\mathbb P\left[ \theta^{K,M\epsilon}_{\text{Jump}}\wedge\theta^{K,M\epsilon}_{\text{No Jump}}\geq \frac{\eta}{u_K K}\wedge\tau^{}_{\text{mut.}} \right]\leq C\e.
\ee
\end{theorem}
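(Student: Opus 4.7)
The plan is to follow the three-step decomposition of the invasion phase already outlined in Section \ref{sec-Thm}, adapting the arguments of \cite{C_TSS,C_PES} to the multi-type setting induced by phenotypic switching. Fix $\epsilon_0>0$ small enough that Theorem \ref{exit_domain_prob}(ii) applies with some $M>0$ in its $M\epsilon$-neighbourhood, and fix $\epsilon\in(0,\epsilon_0)$. Introduce the stopping times
\[
T^{K,\epsilon}_\epsilon\equiv\inf\bigl\{t\geq 0:\,\textstyle\sum_{\tilde p\in[\tilde p]_{\tilde g}}\nu^K_t(\tilde g,\tilde p)\geq\epsilon\bigr\},\quad
T^{K}_{0}\equiv\inf\{t\geq 0:\,\nu^K_t(\tilde g)=0\}.
\]
The proof will estimate (a) $\P[T^K_0<T^{K,\epsilon}_\epsilon]$ and $\P[T^{K,\epsilon}_\epsilon<T^K_0]$ together with a bound of order $\ln K$ on $T^K_0\wedge T^{K,\epsilon}_\epsilon$ (Step~1), (b) convergence to the new equilibrium in time $O(1)$ on the event $\{T^{K,\epsilon}_\epsilon<T^K_0\}$ (Step~2), and (c) the extinction in time $O(\ln K)$ of the traits which are absent from $\mathfrak n^\ast$ on the event $\{T^K_0<T^{K,\epsilon}_\epsilon\}$ (Step~3). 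Because $u_K K\ln K\to 0$ by \eqref{Conv_Cond}, the sum of the three durations is $o(1/(u_K K))$ with high probability, giving the last assertion of the theorem.

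\textbf{Step 1 (branching approximation).} On $[0,T^{K,\epsilon}_\epsilon\wedge\tau_{\text{mut.}}]$ the mutant mass is bounded by $\epsilon$, so by the perturbed version of Theorem \ref{exit_domain_prob}(ii) the resident densities $\nu^K_t(g,p)$, $(g,p)\in\mathcal X_{(\mathbf g,\mathbf p)}$, stay in the $M\epsilon$-neighbourhood of $\bar{\mathfrak n}(\mathbf g,\mathbf p)$ with probability $\to 1$. I will therefore couple the vector $(K\nu^K_t(\tilde g,\tilde p_i))_{i}$ of mutant counts to two $|[\tilde p]_{\tilde g}|$-type branching processes $Z^-$, $Z^+$ whose per-capita rates are those listed in items (i)-(iii) of Section \ref{sec-Thm} with the resident equilibrium replaced by $\bar{\mathfrak n}(\mathbf g,\mathbf p)\pm\mathcal O(\epsilon)$. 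The coupling uses the Poisson point measures of Section \ref{construction}, with indicator functions of slightly smaller/larger thresholds (as in Lemma 3 of \cite{C_TSS}), so that $Z^-\leq K\nu^K(\tilde g,\cdot)\leq Z^+$ up to time $T^{K,\epsilon}_\epsilon\wedge\tau_{\text{mut.}}$. Standard theory of continuous-time multi-type branching processes (cf.\ \cite{A_BP}) identifies the extinction probability of $Z^\pm$ started from one particle in class $\tilde p_1=\tilde p$ as the first coordinate of the smallest fixed point of $\mathbf u^\pm(\mathbf y)=\mathbf 0$, which is continuous in the parameters, hence equal to $q_{{(\mathbf g,\mathbf p)}}(\tilde g,\tilde p)+O(\epsilon)$; the criticality parameter $\lambda_{\max}$ is likewise $O(\epsilon)$-close to that of the limiting generator. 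On the supercritical event $\{Z^\pm$ survives$\}$, the total mass of $Z^\pm$ grows like $e^{\lambda_{\max}t}$ and reaches $\epsilon K$ in time $(1+o(1))\ln(K)/\lambda_{\max}$; on the extinction event it dies out in time $O(\ln K)$ by the standard tail estimate for subcritical/critical multi-type processes. Taking $K\uparrow\infty$ and then $\epsilon\downarrow 0$ yields \eqref{Jump_pro} and its companion.

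\textbf{Step 2 (deterministic phase).} Conditional on $\{T^{K,\epsilon}_\epsilon<T^K_0\}$, the initial condition at time $T^{K,\epsilon}_\epsilon$ lies, with probability tending to one, in a small deterministic neighbourhood $U$ of $(\bar{\mathfrak n}(\mathbf g,\mathbf p),0,\ldots,0,\epsilon\mathbf e)$ (up to the precise phenotype occupancies of the mutant, which are governed by the branching process and therefore bounded by $\epsilon$ in total mass). Theorem \ref{exit_domain_prob}(i) applied with initial trait set $\mathcal X_{((\mathbf g,\mathbf p),(\tilde g,\tilde p))}$ and on a fixed finite time interval $[0,t_2]$ forces $\nu^K$ to shadow the solution of $LVS(d+1,((\mathbf g,\mathbf p),(\tilde g,\tilde p)))$. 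By Assumption \ref{conv_to_fixedpoint}, this solution enters the $\epsilon$-neighbourhood of $\mathfrak n^\ast((\mathbf g,\mathbf p),(\tilde g,\tilde p))$ at some deterministic time $t_2(\epsilon)<\infty$, so $\nu^K_{T^{K,\epsilon}_\epsilon+t_2}$ lies in the $M\epsilon$-neighbourhood of $\mathfrak n^\ast$ with probability $\to 1$.

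\textbf{Step 3 (extinction of vanishing coordinates).} Around $\mathfrak n^\ast$ I again apply the perturbed version of Theorem \ref{exit_domain_prob}(ii), this time to the $|\mathcal X_{((\mathbf g,\mathbf p),(\tilde g,\tilde p))}|$-dimensional $LVS$ and to the locally stable fixed point $\mathfrak n^\ast$, so that the non-vanishing coordinates remain $M\epsilon$-close to $\mathfrak n^\ast$ on a time interval of length $e^{VK}\wedge\tau_{\text{mut.}}$. For each trait $(g,p)$ with $\mathfrak n^\ast_{(g,p)}=0$ (necessarily lying in a class $[p]_g$ all of whose coordinates vanish, by the communicating-class structure ensured by Assumptions \ref{switch} and \ref{recurrence}), I couple the restricted population vector to a $|[p]_g|$-type branching process whose generator is the analogue of the one in Step 1 with $\mathfrak n^\ast$ in place of $\bar{\mathfrak n}(\mathbf g,\mathbf p)$. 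Local stability of $\mathfrak n^\ast$ implies, via the block-triangular structure of the Jacobian of $LVS$ at $\mathfrak n^\ast$, that the largest eigenvalue of this generator is strictly negative; hence the branching process is subcritical and gets extinct in time $O(\ln K)$, establishing $\theta^{K,M\epsilon}_{\text{Jump}}=O(\ln K)$. The symmetric argument on $\{T^K_0<T^{K,\epsilon}_\epsilon\}$ yields $\theta^{K,M\epsilon}_{\text{No Jump}}=O(\ln K)$, completing the proof once the bound $u_K K\ln K\to 0$ is invoked.

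The main obstacle is Step~1: the multi-type branching coupling must be uniform in the phenotypic composition of the initial mutant (here a single individual of phenotype $\tilde p_1$) and robust against the $O(\epsilon)$ perturbation of the resident equilibrium. Controlling the extinction probability of $Z^\pm$ requires verifying continuity of the smallest solution of $\mathbf u=\mathbf 0$ in the parameters, which reduces to the elementary but careful monotonicity/implicit-function argument standard in multi-type branching process theory; the growth-to-$\epsilon K$ time in the supercritical case is handled by a Kesten--Stigum type result applied to $Z^-$ and $Z^+$.
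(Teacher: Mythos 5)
Your proposal is correct and follows essentially the same route as the paper: the three-step decomposition, the Poisson-measure coupling with lower and upper multi-type branching processes, the identification of the extinction probability as the smallest root of the generating function together with an implicit-function-theorem continuity argument in $\epsilon$, and the subcritical block-triangular-Jacobian argument in the third step are all exactly the paper's proof. The only point the paper treats with more care is the \emph{lower} bound on each individual mutant phenotype's density at the end of Step 1 (obtained from the a.s.\ convergence, conditionally on survival, of the type proportions to the strictly positive Perron eigenvector), which is what guarantees that the initial condition for the deterministic phase lies in a compact subset of $(0,\infty)^{|\mathcal X_{(\mathbf g,\mathbf p)}|+|[\tilde p]_{\tilde g}|}$ on which Assumption \ref{conv_to_fixedpoint} applies uniformly; your Step 2 only records an upper bound on the total mutant mass, so you should make this explicit.
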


The structure of the proof is similar to the one of Lemma 3 in \cite{C_TSS} (cf.\ also Lem.\  A.4.\ of \cite{C_PES}). However, we have to extend  the theory to multi-type
branching processes. Thus, the proof is not a simple copy the arguments in \cite{C_TSS}. Before 
proving the theorem, let us collect \emph{some properties about multi-type continuous-time branching 
processes}. Most of these can be found  in \cite{A_BP} or \cite{SEW}. The limit 
theorems we need in the sequel were first obtained  by Kesten and Stigum 
\cite{KesSte1,KesSte2,KesSte3} in the discrete-time case and by Athreya \cite{A_MTBP} in the 
continuous-time case. 

Let $Z(t)$ be a  $k$-dimensional continuous-time branching process. Assume that $Z(t)$ is 
non-singular and that the first moments exist. (Note that a process is \emph{singular} if and only if each individual has exactly one offspring and that the existence of the first moments is sufficient for the non-exposition hypothesis.)   
Then, the  so-called \emph{mean matrix}  $M(t)$ of  $Z(t)$ is the $k\times k$ matrix with elements 
\be
 m_{ij}(t)\equiv \mathbb E[Z_j(t)|Z(0)=\mathbf e_i ], \quad 1\leq i,j\leq k,
\ee
and $\mathbf e_i$ is the $i$-th unit vector in $\mathbb R^k$.
It is well known (cf.\  \cite{A_BP} p.\ 202) that there exists a matrix $\mathbf A$, called the infinitesimal generator of the semigroup $\{\mathbf M(t), t\geq 0\}$, such that 
\be\label{exp_dar}
\mathbf M(t)\equiv \exp(\mathbf A t)=\sum_{n=0}^{\infty}\frac{t^n(\mathbf A)^n}{n!}.
\ee
Furthermore, let $\mathbf r=(r_1,\ldots, r_k)$ be the vector of the branching rates, meaning that every individual
of type $i$ has an exponentially distributed lifetime of parameter $r_i$ and let  $\mathbf M$ be the mean 
matrix of the corresponding discrete-time process, i.e.\ $\mathbf M\equiv \{m_{ij} ,\; i,j=1,\ldots, k\}$,  where $m_{ij}$ is the expected number of type $j$ offspring of a single type-$i$-particle in one generation.  
Then, we can identify the infinitesimal generator $\mathbf A $ as
\be\label{inf. gen.}
\mathbf A= \mathbf R ( \mathbf M- \mathbf I),
\ee  
where  $\mathbf R = diag(r_1,\ldots, r_k)$, i.e.\ $r_{ij}= r_i\delta_i(j)$ and $\mathbf I$ is the identity matrix of size $k$.

Under the basic assumption of \emph{positive regularity}, i.e.  that there exists a time $t_0$ such that $\mathbf M(t_0)$ has strictly  positive entries, the Perron-Frobenius theory asserts that

\begin{enumerate}[(i)]
\setlength{\itemsep}{6pt}
\item the largest eigenvalue of $\mathbf M(t_0)$ is real-valued and  strictly positive,
\item the algebraic and geometric multiplicities of this eigenvalue are both one, and 
\item the corresponding  eigenvector has  strictly positive cmponents.
\end{enumerate}
 By (\ref{exp_dar}), the eigenvalues of $\mathbf M(t)$ are given by $
 \exp({\lambda_i t})$, where 
  $\{\l_i; i=1,\ldots, k\}$ are the eigenvalues of  $\mathbf A$, and both 
  matrices have the same eigenvectors, which implies that the left and right 
  eigenvectors  
 $\mathbf u$ and $\mathbf v$ of  $\l_{\text{max}}(\mathbf A)$ can be chosen with strictly positive 
 components and satisfying 
 \be
\textstyle \sum_{i=1}^k v_iu_i=1\quad \text{and} \quad\sum_{i=1}^k u_i=1.
\ee
 The process $Z$ is called supercritical, critical, or subcritical according as $\l_{\text{max}}(\mathbf A)$ is larger, equal, or smaller than zero. 
 
Observe that the following properties are equivalent  (cf.\  \cite{SEW} p. 95-99 and \cite{DISS_CTMTBP}):\\[0.5em]
$Z$ is irreducible  \; $\Leftrightarrow$\; $\mathbf M$ is irreducible \; $\Leftrightarrow$\; 
 $\mathbf A$ is irreducible \; $\Leftrightarrow$\; 
$\mathbf M(t)$ is irreducible for all $t > 0$ \; $\Leftrightarrow$\; 
 $\mathbf M(t)>0$ for all $t>0$.\\[0.5em]
In particular, irreducible implies positive regular. 
Note that a matrix is irreducible if it is not similar via a permutation to a block upper triangular matrix and that a Markov chain is irreducible if and only if the mean matrix is irreducible.

The following lemma is an extension of Theorem 4 of \cite{C_TSS} for 
multi-type branching processes.
\begin{lemma}\label{expansion of Thm 4}
Let $(Z(t))_{t\geq0}$ be a non-singular and irreducible $k$-dimensional 
continuous-time Markov branching process and $\mathbf q$ the 
extinction vector of $Z$, i.e.
\be q_i\equiv \mathbb P[Z(t)=0 \text{ for some }t\geq0 | Z(0)=\mathbf e_i] 
\quad \text{ for $1\leq i\leq k$}.\ee Furthermore, let $(t_K)_{K\geq 1}$ be 
a sequence of positive numbers such that $\ln(K)\ll t_K$, define $T_{\rho}
\equiv\inf \{t\geq 0: \sum_{i=1}^k Z_i(t)=\rho\}$ and
assume that, for all $i,j\in\{1,\ldots, k\}$ and $t\in[0,\infty)$,  
\be
\mathbb E[Z_j(t) \ln(Z_j(t))| Z(0)=\mathbf e_i]<\infty.
\ee
 \begin{enumerate}[(i)]
 \item{If $Z$  is \emph{subcritical}, i.e.\ $\l_{\text{max}}(\mathbf A)<0$, 
 then for any $\epsilon>0$
 \be
 \lim_{K\uparrow \infty}\mathbb P\left[T^{}_0\leq t_K\wedge T^{}_{\lceil \e 
 K\rceil }\:\middle |\: Z (0)=\mathbf e_i \right ]=1 \qquad \text{ for all }i \in 
 \{1, \ldots, k\}
 \ee
 and 
  \be\label{(i)2}
 \lim_{K\uparrow \infty}\,\inf_{ \mathbf x\in \partial B_{\e K}  }\mathbb P\left[T^{}_0\leq t_K\,\middle|\, Z (0)= \mathbf x\right]=1,\quad\text { where }  \partial B_{\e K}\equiv\{ \mathbf x\in \mathbb N_0^k:  \textstyle\sum_{i=1}^{k} x_i =\lceil \e K\rceil \}.
 \ee
Moreover, for 
$\bar u=  \frac{\max_{1\leq i\leq k}u_i}{\min_{1\leq j\leq k}u_j}$ and for any $\e>0$,
\be\label{(i)3}
\quad\lim_{K\uparrow \infty }
\sup_{\mathbf x \in B_{\e^2 K} }\mathbb P\left[ T^{}_{\lceil \e K\rceil }\leq T^{}_0\:\middle|\:  Z (0)= \mathbf x\right]\leq\bar u \e,\quad \text {where }  B_{\e^2 K}\equiv\{ \mathbf x\in \mathbb N_0^k: \textstyle\sum_{i=1}^{k} x_i \leq\lceil \e^2K\rceil \}.\ee
}
  \item{If $Z$  is \emph{supercritical}, i.e.\ $\l_{\text{max}}(\mathbf A)>0$, then for any $\epsilon>0$ (small enough)
 \be\label{(ii)1}
 \lim_{K\uparrow \infty}\mathbb P\left[T^{}_0\leq t_K\wedge T^{}_{\lceil \e K\rceil }\:\middle|\: Z (0)=\mathbf e_i \right]=q_i \qquad \text{ for all }i \in \{1, \ldots, k\}
 \ee
 and 
  \be\label{(ii)2}
 \lim_{K\uparrow \infty}\mathbb P\left[T^{}_{\lceil \e K\rceil }\leq t_K\:\middle|\: Z (0)=\mathbf e_i \right]=1-q_i\qquad\text{ for all }i \in \{1, \ldots, k\}.
 \ee
Moreover, conditionally on survival, the proportions of the different types present in the population converge
almost surely,  as $t \uparrow  \infty$, to the corresponding ratios of the components
of the eigenvector:  for  all $ i=1,\ldots,k,$
\begin{equation}
\label{limit-ratio}
\lim_{t \uparrow  \infty}\frac{Z_{i}(t)}{\sum_{j=1}^{k}Z_j(t)}=\frac{v_{i}}{\sum_{j=1}^{k}v_j},
\quad \text{a.s. on } \{T_0=\infty\}.
\end{equation}
 }\end{enumerate}
\end{lemma}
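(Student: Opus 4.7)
The plan is to treat (i) and (ii) separately, using in both cases the positive Perron eigenvector $\mathbf u$ of $\mathbf A$ and, in (ii), the Athreya--Kesten--Stigum $L\log L$ theorem for continuous-time multi-type branching processes.

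\emph{Subcritical regime.} With $\mathbf u$ normalised so that $\E[\mathbf u\cdot Z(t)\mid Z(s)]=e^{\lambda_{\max}(t-s)}\mathbf u\cdot Z(s)$, the process $\mathbf u\cdot Z(t)$ is a nonnegative supermartingale (since $\lambda_{\max}<0$). Optional stopping at the a.s.\ finite stopping time $\tau:=T_0\wedge T_{\lceil\epsilon K\rceil}$, combined with the two pointwise bounds $\mathbf u\cdot Z(\tau)\geq(\min_j u_j)\lceil\epsilon K\rceil$ on $\{T_{\lceil\epsilon K\rceil}\leq T_0\}$ and $\mathbf u\cdot Z(0)\leq(\max_j u_j)\lceil\epsilon^2 K\rceil$, yields \eqref{(i)3} in one line. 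Applied with initial condition $\mathbf e_i$, the same inequality gives $\P[T_{\lceil\epsilon K\rceil}\leq T_0\mid Z(0)=\mathbf e_i]=O(1/K)$. For the extinction bounds in (i)1 and \eqref{(i)2}, the spectral decomposition of $\mathbf M(t)=e^{\mathbf A t}$ gives $\P[T_0>s\mid Z(0)=\mathbf e_i]\leq\E[\textstyle\sum_j Z_j(s)\mid Z(0)=\mathbf e_i]\leq Ce^{-\gamma s}$ for any $\gamma<|\lambda_{\max}|$; the branching property decomposes the process started from $\lceil\epsilon K\rceil$ particles into $\lceil\epsilon K\rceil$ i.i.d.\ single-particle subtrees, and a union bound together with $t_K\gg\ln K$ makes $\lceil\epsilon K\rceil Ce^{-\gamma t_K}\to 0$, proving \eqref{(i)2}.

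\emph{Supercritical regime.} The hypothesis $\E[Z_j(t)\log Z_j(t)\mid Z(0)=\mathbf e_i]<\infty$ is Athreya's $L\log L$ condition, under which the martingale $W(t):=e^{-\lambda_{\max}t}\mathbf u\cdot Z(t)$ converges almost surely and in $L^1$ to a random variable $W_\infty\geq 0$ satisfying $\P[W_\infty=0\mid Z(0)=\mathbf e_i]=q_i$ and, by irreducibility, $\{W_\infty=0\}=\{\text{extinction}\}$ up to null sets. On $\{W_\infty>0\}$ one has $\mathbf u\cdot Z(t)\sim W_\infty e^{\lambda_{\max}t}$, which forces $T_{\lceil\epsilon K\rceil}=\lambda_{\max}^{-1}\ln K+O_{\P}(1)\ll t_K$; on $\{W_\infty=0\}$ the process is extinct at some a.s.\ finite time, so for $K$ large $T_0\leq t_K$ while $T_{\lceil\epsilon K\rceil}=\infty$. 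Combined, these two observations yield both \eqref{(ii)1} and \eqref{(ii)2}. For \eqref{limit-ratio}, Athreya's theorem in fact gives the componentwise a.s.\ convergence $e^{-\lambda_{\max}t}Z_i(t)\to v_iW_\infty$ on non-extinction; taking the ratio of two components cancels $W_\infty$ and produces the stated limit.

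The main obstacle is passing from the qualitative dichotomy of extinction versus exponential growth at rate $\lambda_{\max}$ to a uniform-in-$K$ bound on the hitting time $T_{\lceil\epsilon K\rceil}$: in principle a survival trajectory with $W_\infty$ very small could require substantially more than $\lambda_{\max}^{-1}\ln K$ units of time to reach size $\lceil\epsilon K\rceil$. This is resolved via the $L^1$ convergence $W(t)\to W_\infty$ together with the fact that the law of $W_\infty$ conditional on $\{W_\infty>0\}$ has no atom at $0^+$, giving $\P[W_\infty<\delta\mid\text{non-extinction}]\to 0$ as $\delta\downarrow 0$, which in turn produces the uniform bound $\P[T_{\lceil\epsilon K\rceil}>t_K\mid\text{non-extinction}]\to 0$ needed to close the argument.
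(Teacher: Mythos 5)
Your proposal is correct and follows essentially the same route as the paper: optional stopping of the Perron-eigenvector (super)martingale for \eqref{(i)3}, the branching property plus exponential decay of the survival probability for (i)1 and \eqref{(i)2}, and the Athreya/Kesten--Stigum $L\log L$ theorem giving $e^{-\lambda_{\text{max}}t}Z(t)\to W\mathbf v$ for part (ii), including the identification of $\{W=0\}$ with extinction and the ratio argument for \eqref{limit-ratio}. The only cosmetic deviation is in \eqref{(i)2}, where you bound the survival probability by the first moment via Markov's inequality and a union bound over the $\lceil\epsilon K\rceil$ independent subtrees, whereas the paper invokes the sharp asymptotics $1-q_i(t)\sim C u_i e^{\lambda_{\text{max}}t}$ together with the product formula $\prod_i q_i(t)^{x_i}$; both close because $t_K\gg\ln K$.
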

\begin{proof}
We start with the proof of
(i).  Since $Z(t)$ is in this case a subcritical irreducible continuous-time branching process and $\mathbb E[Z_j(t) \ln(Z_j(t))| Z(0)=\mathbf e_i]<\infty$, we obtain by applying Satz 6.2.7 of \cite{SEW} the existence of a constant $C>0$ such that
 \be\label{Lim_Extinct}
 \lim_{t \uparrow  \infty} \frac{1-q_i(t)}{\eee^{\l_{\text{max}}(\mathbf A) t}}=C  u_i,
 \ee 
 where $q_i(t)\equiv\mathbb P[Z(t)=0 \: |\: Z(0)=\mathbf e_i]$.  Moreover, we have a non-explosion condition. Thus, for all $\epsilon>0$, either $T^{}_{\lceil \e K\rceil }$ equals infinity or it converges to infinity as $K \uparrow \infty$. Putting both together, there exists a sequence $s_K$ with $\lim_{K \uparrow  \infty} s_K=+ \infty$ such that  
 \be
 \lim_{K\uparrow \infty}\mathbb P\left[T^{}_0\leq t_K\wedge T^{}_{\lceil \e K\rceil }\middle | Z (0)=\mathbf e_i \right ]\geq 
  \lim_{K\uparrow \infty}\mathbb P\left[T^{}_0\leq s_K\:\middle |\: Z (0)=\mathbf e_i \right ]= \lim_{K\uparrow \infty} q_i(s_K)=1.
 \ee
The branching
property implies that  for all $\mathbf x\in \mathbb N^k$, $\mathbb P[Z(t) = 0\:|\: Z(0)= \mathbf x] = \prod_{i=1}^{k}(q_i(t))^{x_i}$ (cf.\ \cite{DISS_CTMTBP} p.\ 25).
So, we get 
  \be\label{for (i) 2} 
  \inf_{\mathbf x\in\partial B_{\e K} }\!\mathbb P\left[T^{}_0\leq t_K\:\middle |\: Z (0)=\mathbf x \right ]= 
  \inf_{\mathbf x\in \partial B_{\e K} }\!\mathbb P\left[Z(t_K)=0\:\middle |\: Z (0)=\mathbf x \right ]=
  \inf_{\mathbf x\in \partial B_{\e K} } \prod_{i=1}^{k}(q_i(t_K))^{x_i}.
\ee
For all $i\in\{1,\ldots, k\}$, $1\geq (q_i(t_K))^{x_i}\geq(q_i(t_K))^{\lceil\e K\rceil} $ and by (\ref{Lim_Extinct}) we have $1-q_i(t_K)=O(\eee^{{\l_{\text{max}}(\mathbf A) t_K}})$. Moreover,  
for any sequence $ (w_K)_{K\geq 1}$ such that  $\lim_{K\uparrow \infty}w_K=0$, 
\be 
\lim_{K\uparrow \infty}\lb1+\frac{w_K}{K}\rb^K=1.
\ee 
This implies that, for all $t_K$ with $t_K\gg\ln(K)$ and $C>0$, since  $
\lim_{K\uparrow \infty} C \eee^{\l_{\text{max}}(\mathbf A) t_k}\lceil\e 
K\rceil=0$,
\be\lim_{K\uparrow \infty }(1-C\eee^{\l_{\text{max}}(\mathbf A) 
t_k})^{\lceil\e K\rceil}=1.\ee 
Thus, taking the limit $K\uparrow  \infty$ in (\ref{for (i) 2}), we obtain the 
desired equation (\ref{(i)2}). To prove the  inequality (\ref{(i)3}) we use the 
fact that 
$(\sum_{i=1}^k u_iZ_i(t))\eee^{-\lambda_{\text{max}}t}$ is a martingale (cf.\ \cite{A_MTBP}, Prop.\  2).  
By applying Doob's stopping theorem to the stopping time $T_{\lceil\e K\rceil}\wedge T_0$ we obtain, for all $\mathbf x\in B_{\e^2 K}$, that
\be\textstyle
\mathbb E\left[\left(\sum_{i=1}^k u_iZ_i(T_{\lceil\e K\rceil})\right)\eee^{-\lambda_{\text{max}}(\mathbf A)T_{\lceil\e K\rceil}}\mathds 1_{\{T_{\lceil\e K\rceil}< T_0\}}^{}\middle| Z(0)=\mathbf x\right]=\sum_{i=1}^ku_i x_i.
\ee
Therefore, since $\lambda_{\text{max}}(\mathbf A)<0$ in the subcritical case, 
\be
\mathbb E\left[\min_{1\leq i\leq k} u_i\lceil\e K\rceil \mathds 1_{\{T_{\lceil\e K\rceil}< T_0\}}^{}\middle| Z(0)=\mathbf x\right]\leq\max_{1\leq i\leq k} u_i\lceil \e^2 K\rceil, \quad \text{  for all $\mathbf x\in B_{\e^2 K}$},
\ee
which implies (\ref{(i)3}).

Let us continue by proving
(ii). Since $Z(t)$ is supercritical in this case,
applying Theorem 5.7.2 of \cite{A_BP} yields that
\be\label{Thm 5.7.2} 
\lim_{t \uparrow  \infty} Z(t)(\o) \eee^{-\l_{\text{max}}(\mathbf A) t}=W(\omega) \mathbf v, \hbox{\rm a.s.}, 
\ee
 where $W$ is a nonnegative random variable. Since we
assume that,
for all $i\in\{1,\ldots, k\}$, \\
 $\mathbb E[Z_j(t) \ln(Z_j(t))| Z(0)=\mathbf e_i]<\infty$, we get that 
 \be
\mathbb P[W=0| Z(0)=\mathbf e_i ]=q_i, \quad \mathbb E[W| Z(0)=\mathbf e_i ]=u_i,
\ee
and $W$ has an absolutely continuous distribution on $(0,\infty)$.
 All components of $\mathbf v$ are strictly positive and $W>0$, a.s., on 
 the event $\{\o:T_0(\o)=\infty\}$. Hence, we have 
\be 
Z(t)=O\lb \eee^{\l_{\text{max}}(\mathbf A) t}\rb \quad\text{ a.s.\quad  on }
\{T_0=\infty\}.
\ee
This implies, for $K$ large enough, $\mathbb P[ Z(t_K)< \lceil \e K\rceil, 
T_0=\infty]=0$ and thus
\be
\lim_{K\uparrow \infty}\mathbb P[T_0=\infty,\: T_{\lceil \e K\rceil }\geq 
t_K]=0.
\ee
Note that we used that $t_K\gg\ln(K)$.
Since $\mathbb P\left[T_0=\infty| Z(0)=\mathbf e_i \right]=1-q_i $, we deduce (\ref{(ii)2}).
On the other hand, there exist two sequences $s^1_K$ and $s^2_K$, which  converge to infinity as $K\uparrow  \infty$,  such that, for $K$ large enough,
 $s_K^1\leq t_K\wedge T_{\lceil \e K\rceil }\leq s_K^2$ a.s.. This implies (\ref{(ii)1}), because for all $i\in\{1,\ldots k\}$ and $l=1,2$, hold 
$\lim_{K\uparrow \infty}\mathbb P[T_0<s_K^l|Z_0=\mathbf e_i]=q_i$. 
Note that equation (\ref{limit-ratio}) is a simple consequence  of (\ref{Thm 5.7.2}).
\end{proof}
Using these properties about multi-type branching processes we can now prove the theorem about the three steps of invasion.
\begin{proof}[Proof of Theorem \ref{The three steps of invasion}]\emph{The first invasion step.} Let us introduce the following stopping times
\bea
\theta^{K, M\epsilon}_{\text{exit}}&=&\inf\Big\{t\geq 0: ||\nu_t^K-\textstyle\sum_{ x\in\mathcal X_{(\mathbf g,\mathbf p)}}\bar{\mathfrak n}_{x}(\mathbf g,\mathbf p)\delta_{x}||_{TV} >M \epsilon\Big\}\\
\tilde \theta^{K}_\epsilon&=&\inf\Big\{t\geq 0: \:\nu_t^K(\tilde g) \geq \epsilon\:\Big\}\\
\tilde\theta^{K}_{0}&=&\inf\Big\{t\geq 0: \:\nu_t^K(\tilde g)= 0\:\Big\}
\eea
Until $\tilde\theta^{K}_{\epsilon}$ the mutant population $\nu_t^K(\tilde g)$ influences only the death and switching rates of the resident population and this perturbation is uniformly bounded by $(\bar c + \bar s_{\text{ind.}})\epsilon$. Thus, by applying Theorem \ref{exit_domain_prob} (ii), we obtain
\be\label{exit_domain_apply}
\lim_{K\uparrow  \infty}
\mathbb P\left [  \theta^{K, M \epsilon}_{\text{exit}} <\eee^{KV}\!\wedge \tau^{}_{\text{mut.}}\wedge \tilde\theta^{K}_{\epsilon}\right]=0.
\ee
On the time interval  $[0,\theta^{K,M \epsilon}_{\text{exit}} \wedge \tau^{}_{\text{mut.}}\wedge \tilde\theta^{K}_{\epsilon}]$, the resident population can be approximated by $\textstyle\sum_{ x\in\mathcal X_{(\mathbf g,\mathbf p)}}\bar{\mathfrak n}_{x}(\mathbf g,\mathbf p)\delta_{x}$ and no further mutant appears. This allows us to approximate $\nu_t^K(\tilde g)$ by multi-type branching processes.

Let $k\equiv |[\tilde p]_{\tilde g}|$. We construct two $(\mathbb N_0)^{k}$- valued processes $X^{1,\e}(t)$ and $X^{2,\e}(t)$, using the pathwise definition in terms of Poisson point measures of $\nu^K_t$, 
which control the mutant population $\nu_t^K(\tilde g)$.
To this aim let us denote the elements of $[\tilde p]_{\tilde g}$ by $\tilde p_1,\ldots,\tilde p_{k}$ (w.l.o.g.\ $\tilde p\equiv\tilde p_1$). Then, we   define $X^{1,\e}$ by
\begin{align}
 X^{1,\e}(t)
&\equiv  X^{1,\e}(0)+\sum_{j=1}^{k}\int_0^t \!\int_{\mathbb N_0}\!\int_{\mathbb R_+}
	\mathds 1_{\left\{i\leq X^{1,\e}_{j}({s-}),\; 	
				\theta \leq b(\tilde p_i)-\epsilon\right\}}
	  \mathbf e_j N^{\text{birth}}_{(\tilde g,\tilde p_j)}(ds,di,d\th)\\\nonumber
&\quad
 -\sum_{j=1}^{k}\int_0^t \!\int_{\mathbb N_0}\!\int_{\mathbb R_+}
	\mathds 1_{\left\{i\leq X^{1,\e}_{j}({s-}),\; 
					\theta \leq d (\tilde p_j)
						+\sum_{(g,p)\in \mathcal X_{(\mathbf g,\mathbf p)}}c( \tilde p_j, p )  \bar{\mathfrak n}_{(g,p)}(\mathbf g,\mathbf p)+\bar c M\epsilon \right\}}
		 \mathbf e_j  N_{(\tilde g,\tilde p_j)}^{\text{death}}(ds,di,d\th)	\\	\nonumber
&\quad+	\sum_{j=1}^{k}\int_0^t \!\int_{\mathbb N_0}\!\int_{\mathbb R_+}\!\int_{[\tilde p]_{\tilde g}}
	\mathds 1_{\left\{i\leq X^{1,\e}_{j}({s-}),\:i\neq j \right\}} \:
	\bigg( \:\mathds 1_{\left\{ \theta \leq s_{\text{nat.}}^{\tilde g} (\tilde p_j,\tilde p_l)
		+\sum_{(g,p)\in \mathcal X_{(\mathbf g,\mathbf p)}} s_{\text{ind.}}^{\tilde g} (\tilde p_j,\tilde p_l)(p) 	
				\bar{\mathfrak n}_{(g,p)}(\mathbf g,\mathbf p) - \bar s_{\text{ind.}} M  \epsilon\right\}}
				 \mathbf e_l\\\nonumber
	&\qquad \qquad \qquad -  \mathds 1_{ \left\{\theta \leq s_{\text{nat.}}^{\tilde g} (\tilde p_j,\tilde p_l)
			+\sum_{(g,p)\in \mathcal X_{(\mathbf g,\mathbf p)}} s_{\text{ind.}}^{\tilde g} (\tilde p_j,\tilde p_l)(p) 
				\bar{\mathfrak n}_{(g,p)}(\mathbf g,\mathbf p) + \bar s_{\text{ind.}} M  \epsilon\right\}} \mathbf e_j\:
	 \bigg)\: 
	N_{(\tilde g,\tilde p_j)}^{\text{switch}}(ds,di,d\th, d\tilde p_l),
\end{align}
and similar $X^{2,\e}$ by
\begin{align}
 X^{2,\e}(t)
&\equiv  X^{2,\e}(0)+\sum_{j=1}^{k}\int_0^t \!\int_{\mathbb N_0}\!\int_{\mathbb R_+}
	\mathds 1_{\left\{i\leq X^{2,\e}_{j}({s-}),\; 	
				\theta \leq b(\tilde p_i)+\e\right\}}
	  \mathbf e_j N^{\text{birth}}_{(\tilde g,\tilde p_j)}(ds,di,d\th)\\\nonumber
&
\quad -\sum_{j=1}^{k}\int_0^t \!\int_{\mathbb N_0}\!\int_{\mathbb R_+}
	\mathds 1_{\left\{i\leq X^{2,\e}_{j}({s-}),\; 
					\theta \leq d (\tilde p_j)
						+\sum_{(g,p)\in \mathcal X_{(\mathbf g,\mathbf p)}}c( \tilde p_j, p )  \bar{\mathfrak n}_{(g,p)}(\mathbf g,\mathbf p)-\bar c M\epsilon \right\}}
		 \mathbf e_j  N_{(\tilde g,\tilde p_j)}^{\text{death}}(ds,di,d\th)	\\	\nonumber
&\quad+	\sum_{j=1}^{k}\int_0^t \!\int_{\mathbb N_0}\!\int_{\mathbb R_+}\!\int_{[\tilde p]_{\tilde g}}
	\mathds 1_{\left\{i\leq X^{1,\e}_{j}({s-}) ,\:i\neq j\right\}} \:
	\bigg( \:\mathds 1_{\left\{ \theta \leq s_{\text{nat.}}^{\tilde g} (\tilde p_j,\tilde p_l)
		+\sum_{(g,p)\in \mathcal X_{(\mathbf g,\mathbf p)}} s_{\text{ind.}}^{\tilde g} (\tilde p_j,\tilde p_l)(p) 	
				\bar{\mathfrak n}_{(g,p)}(\mathbf g,\mathbf p) + \bar s_{\text{ind.}} M  \epsilon\right\}}
				 \mathbf e_l\\\nonumber
	&\qquad \qquad \qquad -  \mathds 1_{ \left\{\theta \leq s_{\text{nat.}}^{\tilde g} (\tilde p_j,\tilde p_l)
			+\sum_{(g,p)\in \mathcal X_{(\mathbf g,\mathbf p)}} s_{\text{ind.}}^{\tilde g} (\tilde p_j,\tilde p_l)(p) 
				\bar{\mathfrak n}_{(g,p)}(\mathbf g,\mathbf p) - \bar s_{\text{ind.}} M  \epsilon\right\}} \mathbf e_j\:
	 \bigg)\: 
	N_{(\tilde g,\tilde p_j)}^{\text{switch}}(ds,di,d\th, d\tilde p_l),\end{align}
where  $\mathbf e_j $ is the $j$-th unit vector in $\mathbb R^{k}$ and 
$N^{\text{birth}}$, $N^{\text{death}}$, and $ N^{\text{switch}}$ are the collections of Poisson point measures defined in Subsection \ref{construction}.
Note that
$X^{1,\e}(t)$ and $X^{2,\e}(t)$ are $k$-type
branching processes with the following dynamics: For each  $1\leq i\leq k$, each individual in $X^{1,\e}(t)$, respectively $X^{2,\e}(t)$, with trait $(\tilde g,\tilde p_i)$ undergoes

\begin{enumerate}[(i)]
 \item {birth (without mutation) with rate \:$b(\tilde p_i)-\epsilon$,  respectively  $ b(\tilde p_i)+\e+ 2 (k-1) \bar s_{\text{ind.}}M \epsilon$,\:}
\item {death with rate \:$D_{(\mathbf g,\mathbf p)}(\tilde p_i)  +\bar c M \epsilon+ 2(k-1) \bar s_{\text{ind.}} M \epsilon$,\:  respectively  $D_{(\mathbf g,\mathbf p)}(\tilde p_i) -\bar c M \epsilon$,\\ where $D_{(\mathbf g,\mathbf p)}(\tilde p_i)\equiv  d(\tilde p_i) + \sum_{(g,p)\in\mathcal X_{(\mathbf g,\mathbf p)}} c(\tilde p_i, p) 
\bar{\mathfrak n}_{(g,p)}(\mathbf g,\mathbf p)$, }
\item{ switch to $\tilde p_j$ with rate $S_{(\mathbf g,\mathbf p)}(\tilde p_i,\tilde p_j)- \bar s_{\text{ind.}} M \epsilon$  for all $j\neq i$ (for both processes),\\
where $ 
S_{(\mathbf g,\mathbf p)}(\tilde p_i,\tilde p_j)\equiv s_{\text{nat.}}^{\tilde g}(\tilde p_i, \tilde p_j) +\sum_{( g, p)\in\mathcal X_{(\mathbf g,\mathbf p)}}
				 s^{\tilde g}_{\text{ind.}}(\tilde p_i, \tilde p_j)( p) \bar \frakn_{ ( g, p)}$. }
\end{enumerate} 
Moreover, the processes $X^{1,\e}(t)$ and $X^{2,\e}(t)$ have the following property:
There exists a $K_0>1$ such that  for all ${\tilde p}_i\in [\tilde p]_{\tilde g}$ and for all $K\geq K_0$
\be\label{compare}
\forall \:0\:\leq t\:\leq \theta^{K,\epsilon}_{\text{exit}} \wedge \tau^{}_{\text{mut.}}\wedge \tilde\theta^{K}_{\epsilon}:\qquad
 X^{1,\e}_{i}(t)\leq \nu_t^K(\tilde g,\tilde p_i) K\leq  X^{2,\e}_{i}(t). \qquad 
\ee
Hence, if $ \tilde\theta^{K}_{\epsilon}\leq \theta^{K,\epsilon}_{\text{exit}} \wedge \tau^{}_{\text{mut.}}$, then
\be
\inf\left\{t\geq 0: X^{2,\e}(t)= \lceil \epsilon K\rceil\right\}\leq \tilde\theta^{K}_{\epsilon}\leq \inf\left\{t\geq 0: X^{1,\e}(t)=\lceil \epsilon K\rceil\right\}.
\ee
On the other hand, if $\inf\{t\geq 0: X^{2,\e}(t)=0\}
\leq\inf\{t\geq 0: X^{2,\e}(t)= \lceil \epsilon K\rceil\}\wedge \theta^{K,\epsilon}_{\text{exit}} \wedge \tau^{}_{\text{mut.}}$, then
\be
 \tilde\theta^{K}_{0} \leq \inf\{t\geq 0: X^{2,\e}(t)=0\}.
\ee

Next, let us identify the infinitesimal generator of the control processes $X^{1,\e}$ and $X^{2,\e}$. Therefore, define, for $ i=1,\ldots,k$,
\be\label{apparent-fitness} 
f_{(\mathbf g, \mathbf p)}	{(\tilde g, \tilde p_i)}	\equiv b(\tilde p_i) -D_{(\mathbf g,\mathbf p)}(\tilde p_i) -\textstyle\sum_{j\neq i}S_{(\mathbf g,\mathbf p)}(\tilde p_i,\tilde p_j).
\ee 

($f_{(\mathbf g, \mathbf p)}{(\tilde g, \tilde p_i)}$ would be the invasion fitness of phenotype $\tilde p_i$ if there was no switch back from the
other phenotypes to $\tilde p_i$.)
 Then, by Equation (\ref{inf. gen.}), 
 the infinitesimal generators are given by the following matrixes  

\be \mathbf A ({X^{l,\epsilon}}) =\begin{pmatrix}
f^{l,\e}_{(\mathbf g, \mathbf p)}(\tilde g, \tilde p_1)&	S_{(\mathbf g,\mathbf p)}(\tilde p_1,\tilde p_2)\!-\! \bar s_{\text{ind.}} M \epsilon&	\ldots	&	S_{(\mathbf g,\mathbf p)}(\tilde p_1,\tilde p_k)\!- \!\bar s_{\text{ind.}} M \epsilon\\[0.3em]
S_{(\mathbf g,\mathbf p)}(\tilde p_2,\tilde p_1)\!-\! \bar s_{\text{ind.}} M \epsilon	&	f^{l,\e}_{(\mathbf g, \mathbf p)}(\tilde g, \tilde p_2)&			&		\\[0.3em]
\vdots	&			&	\ddots	&		\vdots		\\[0.3em]
S_{(\mathbf g,\mathbf p)}(\tilde p_k,\tilde p_1)\!-\! \bar s_{\text{ind.}} M \epsilon&		&		\ldots	&f^{l,\e}_{(\mathbf g, \mathbf p)}(\tilde g, \tilde p_k)
\end{pmatrix} \ee
for $l\in\{1,2\}$, where 
$
f^{1,\e}_{(\mathbf g, \mathbf p)}(\tilde g, \tilde p_i)
\equiv  f_{(\mathbf g, \mathbf p)}	{(\tilde g, \tilde p_i)}	- \epsilon(1+ \bar c M + (k-1)\bar s_{\text{ind.}} M )$ and $
f^{2,\e}_{(\mathbf g, \mathbf p)}(\tilde g, \tilde p_i)\equiv   f_{(\mathbf g, \mathbf p)}	{(\tilde g, \tilde p_i)}	+ \epsilon(1+ \bar c M+3(k-1)\bar s_{\text{ind.}} M )$.

We prove in the following 
 that 
the number of mutant individuals  grow with positive probability to $\epsilon K$ before dying out if and only if $\l_{\text{max}}$ of $\mathbf A_{(\tilde g,\tilde p)}\equiv \lim_{\epsilon \to 0} \mathbf A(X^{1,\e})$ is strictly positive. Thus, $\l_{\text{max}}(\mathbf A_{(\tilde g,\tilde p)})$ is an appropriate generalisation of the invasion fitness of the class $[\tilde p]_{\tilde g}$:
\begin{equation}
\label{invasion-fitness-case1}
F_{[\tilde p]_{\tilde g}}(\mathbf g, \mathbf p)\equiv \l_{\text{max}}(\mathbf A_{(\tilde g,\tilde p)}).
\end{equation}
 Since the birth and death rates of $X^{1,\e}$ and $X^{2,\e}$  are positive and since Assumption \ref{recurrence} implies that 
$\mathbf M(X^{1,\e})$ and $\mathbf M(X^{2,\e})$ are irreducible, we obtain that the processes  $X^{1,\e}$ and $X^{2,\e}$ are non-singular and irreducible. Thus, $X^{1,\e}$ and $X^{2,\e}$ satisfy the conditions of Lemma \ref{expansion of Thm 4}.
For $l\in\{1,2\}$, let $\mathbf q (X^{l,\e})$  denote the extinction probability vector  of  $ X^{l,\e}$, i.e.\ 
\be \mathbf q (X^{l,\e})\equiv(q_1 (X^{l,\e}),\ldots, q_k (X^{l,\e})),  \quad\text{where } q_i(X^{l,\e}))\equiv\mathbb P\left[X^{l,\e}(t)=0 \text{ for some }t\; \middle| X^{l,\e}(0)=\mathbf e_i\right].
\nonumber \ee
Observe  that  $\mathbf q (X^{l,\e})=(1,\ldots, 1)$ if $X^{l,\e}$ is not supercritical. To characterise  $\mathbf q (X^{l,\e})$ in the supercritical case,  let us introduce the following functions
 \be \mathbf u^l:[0,1]^k\times(-\eta,\eta) \to \mathbb R^k, \quad \text {where $\eta$ is some small enough constant and $l\in\{1,2\}$,}
 \ee 
 defined, for all $1\leq i\leq k$, by
 \bea\mathbf u^1_i(\mathbf y, \e )\!&\equiv\!&\nonumber
\Bigl(b(\tilde p_i) -\e\Bigr)\:y_i^2+\sum_{j\neq i}\lb S_{(\mathbf g,\mathbf p)}(\tilde p_i,\tilde p_j)- \bar s_{\text{ind.}} M \epsilon \rb\: y_j+D_{(\mathbf g,\mathbf p)}(\tilde p_i)+\bar c M\e + 2(k-1) \bar s_{\text{ind.}} M \epsilon\\&&
-\:\Big( b(\tilde p_i) +\sum_{j\neq i} S_{(\mathbf g,\mathbf p)}(\tilde p_i,\tilde p_j)+D_{(\mathbf g,\mathbf p)}(\tilde p_i)+ \lb1-\bar c M+ (k-1) \bar s_{\text{ind.}} M \rb\epsilon\Big)\:y_i.
\eea  
and 
 \bea\mathbf u^2_i(\mathbf y, \e )\!&\equiv\!&\nonumber
\Bigl( b(\tilde p_i) +\e + 2(k-1) \bar s_{\text{ind.}} M \epsilon\Bigr)\:y_i^2+\sum_{j\neq i}\lb S_{(\mathbf g,\mathbf p)}(\tilde p_i,\tilde p_j)- \bar s_{\text{ind.}} M \epsilon \rb\: y_j+D_{(\mathbf g,\mathbf p)}(\tilde p_i)-\bar c M\e \\&&
-\:\Big( b(\tilde p_i) +\sum_{j\neq i} S_{(\mathbf g,\mathbf p)}(\tilde p_i,\tilde p_j)+D_{(\mathbf g,\mathbf p)}(\tilde p_i)+ \lb1-\bar c M+ (k-1) \bar s_{\text{ind.}} M \rb\epsilon\Big)\:y_i.
\eea  
Observe that $\mathbf u^1 (\mathbf y,\e )$  and $\mathbf u^2 (\mathbf y,\e )$  are  the infinitesimal generating functions of $X^{1,\e}$ and $X^{2,\e}$ and that $\mathbf u^1 (\mathbf y,0 )=\mathbf u^2 (\mathbf y,0 )$. Moreover,
 the extinction vector of a multi-type branching process is  given as the unique root of the
  generating function in the unit cube (cf.\ \cite{A_BP} p.\ 205 or \cite{SEW} Chap.\ 5). Thus,  in the supercritical case
 $\mathbf q (X^{1,\e})$  is   the unique solution of 
 
\be \label{eq_extinct1}
\mathbf u^1(\mathbf y, \e  )=0 \qquad \text{for $\mathbf y \in[0,1)^k$ }
\ee
and $\mathbf q (X^{2,\e})$ is  the unique solution of 
\be \label{eq_extinct2}
\mathbf u^2(\mathbf y, \e  )=0 \qquad \text{for $\mathbf y \in[0,1)^k$. }
\ee
These solutions are in general not analytic. 
Applying Lemma \ref{expansion of Thm 4} to $X^{1,\e}$ and $X^{2,\e}$ we obtain that there exists $C_1>0$ such that, for all $\eta>0$, $\e>0$ sufficiently small and $K$ large enough, 
\bea
\mathbb P\left[\theta_{\text{No Jump}}^{K, M\e}<\tfrac{\eta}{K u_K}\wedge
\theta^{K, M \epsilon}_{\text{exit}} \wedge \tau^{}_{\text{mut.}}\wedge \tilde\theta^{K}_{\epsilon}\right]
&\geq& \mathbb P\left[ \inf\{t\geq 0: X^{2,\e}(t)=0\}<\tfrac{\eta}{K u_K}\right]\qquad\nonumber\\&\geq& q_1(X^{2,\e})-C_1 \e
\eea
and
\bea
\mathbb P\left[\tilde\theta^{K}_{\epsilon}<\tfrac{\eta}{K u_K}\wedge
\theta^{K, M\epsilon}_{\text{exit}} \wedge \tau^{}_{\text{mut.}}\wedge \tilde\theta^{K}_{0}\right]
&\geq& \mathbb P\left[ \inf\{t\geq 0: X^{1,\e}(t)=0\}<\tfrac{\eta}{K u_K}\right]\qquad\nonumber\\&\geq& 1-q_1(X^{1,\e})-C_1 \e.
\eea
If $X^{2,\e}$ is sub- or critical for $\e$ small enough, then $\lim_{\e\downarrow 0}q_1(X^{2,\e})=\lim_{\e\downarrow 0}q_1(X^{1,\e})=1$. In the supercritical case, let $\mathbf q\in[0,1)^k$ be  the solution  of $\mathbf u^1(\mathbf y ,0)=\mathbf u^2(\mathbf y ,0)=0$. 
Then, by applying the implicit function theorem, there exist open sets $U^1\subset \mathbb R$  and $U^2\subset \mathbb R$  containing $0$,
 open sets $V^1\subset \mathbb R^k$ and $V^2\subset \mathbb R^k$ containing $\mathbf q$, and two unique continuously differentiable functions $g^1:U^1\to V^1$  and $g^2:U^2\to V^2$ such that 
\be
\{(\e,g^1(\e))|\e\in U^1 \}=\{(\e,\mathbf y)\in U^1\times V^1| \mathbf u^1(\mathbf y,\e)=0\}.
\ee
and

\be
\{(\e,g^2(\e))|\e\in U^2 \}=\{(\e,\mathbf y)\in U^2\times V^2| \mathbf u^2(\mathbf y,\e)=0\}.
\ee
By definition, $g^1(0)=g^2(0)=\mathbf q$ and $q_1=q_{(\mathbf g, \mathbf p)}(\tilde g,\tilde p)$.
We can linearise and obtain that  there exists a constant $C_2>0$ such that
\be
q_1(X^{1,\e})\leq q_{(\mathbf g, \mathbf p)}(\tilde g,\tilde p) +C_2\e \quad \text{and} \quad  q_1(X^{2,\e})\geq q_{(\mathbf g, \mathbf p)}(\tilde g,\tilde p)-C_2\e
\ee
Therefore, 
\be
\lim_{K\uparrow \infty}\mathbb P\left[\theta_{\text{No Jump}}^{K, M\e}\wedge \tilde\theta^{K}_{\epsilon}<\tfrac{\eta}{K u_K}\wedge
\theta^{K,M\epsilon}_{\text{exit}} \wedge \tau^{}_{\text{mut.}}\right]
\geq 1-
2(C_1 +C_2)\e.
\ee
Conditionally on survival, the proportions of the different phenotypes in  $X^{1,\e}$ converge
almost surely,  as $t \uparrow  \infty$, to the corresponding ratios of the components
of the eigenvector, which are all strictly positive (cf.\ Lem.\  \ref{expansion of Thm 4}, Eq.\ (\ref{limit-ratio})).
 Moreover, there exists a  constant $C_3>0$ such that, for all $\e$ small enough, 
\be
\lim_{K\uparrow \infty}\mathbb P\:\Big[
\Big\{\tilde\theta^{K}_{\epsilon}<\tfrac{\eta}{K u_K}\wedge
\theta^{K, M\epsilon}_{\text{exit}} \wedge \tau^{}_{\text{mut.}}\Big\} \cap\Big \{\inf\{t\geq0: X^{1,\e}(t)=0\}<\infty \Big\}\Big]<C_3\e
\ee
and $\tilde\theta^{K}_{\epsilon}$ converges to infinity as $K\uparrow \infty$.
Thus, conditionally on $\{\tilde\theta^{K}_{\epsilon}<\tfrac{\eta}{K u_K}\wedge
\theta^{K, M\epsilon}_{\text{exit}} \wedge \tau^{}_{\text{mut.}}\}$, there exists a (small) constant $C_4>0$ such that the probability that the densities of the phenotypes $\{\tilde p_1,\ldots,\tilde p_k\}$,  are all larger than $C_4\e$ at time $\tilde\theta^{K}_{\epsilon}$ convergences to one as first $K\uparrow  \infty $ and then $\e\to 0$.
More precisely, there exists constants $C_4>0$ and  $C_5>0$ such that,
 for all $\e$ small enough, 
\be
\lim_{K\uparrow \infty} \label{lower_bound_mutants}
\mathbb P\left[\tilde\theta^{K}_{\epsilon}<\tfrac{\eta}{K u_K}\wedge
\theta^{K, M\epsilon}_{\text{exit}} \wedge \tau^{}_{\text{mut.}},\:\exists i\in\{1,\ldots k\}:\:\nu^K_{\tilde\theta^{K}_{\epsilon}}(\tilde p_i)\leq C_4\e\right]\leq C_5\e.
\ee

\emph{The second invasion step.} By  Assumption \ref{conv_to_fixedpoint}, any solution of $LVS(d+1,((\mathbf g,\mathbf p), (\tilde g,\tilde p)))$ with initial state in the compact set
 \be \label{set A}A\equiv \left\{x\in\mathbb R^{|\mathcal X_{(\mathbf g,\mathbf p)}|}:|x-\bar{\mathfrak n}(\mathbf g,\mathbf p)|\leq M\e\right\}\times [C_4\e,\e]^k\ee converge, as $t \uparrow  \infty$, to the unique locally strictly stable
 equilibrium  $\mathfrak n^{*}((\mathbf g,\mathbf p)),(\tilde g,\tilde p))$. Therefore, for all $\e>0$ there exists  $T(\e)\in\mathbb R$ such that any of these trajectories do not leave the set  
 \be\left\{x\in\mathbb R^{|\mathcal X_{(\mathbf g,\mathbf p)}|+k}:|x-\mathfrak n^{*}((\mathbf g,\mathbf p)),(\tilde g,\tilde p))|\leq \e^2/2\right\}\ee  after time $T(\e)$.
 Back to the stochastic system, let us introduce on the event $\{\tilde\theta^{K}_{\epsilon}<\tfrac{\eta}{K u_K}\wedge
\theta^{K, M\epsilon}_{\text{exit}} \wedge \tau^{}_{\text{mut.}}\}$  the following stopping time
\be
\theta^{K,\e}_{\text{near } \mathfrak{n}^*}=\inf\Big\{t\geq\tilde\theta^{K}_{\epsilon}: \Big|\Big|\nu_t^K-\textstyle\sum_{x\in\mathcal X_{((\mathbf g,\mathbf p),(\tilde g,\tilde p))}} \mathfrak n^*_{x} ((\mathbf g,\mathbf p),(\tilde g,\tilde p)) \delta_{x}\Big|\Big|_{TV} <\e ^2  \Big\}.
\ee 
Then, we conclude by using the strong Markov property at  $\tilde\theta^{K}_{\epsilon}$ and Theorem \ref{exit_domain_prob} (i) on $[0,T(\e)]$ 
 that there exists a constant $C_6>0$ such that, for all $\e$ small enough,
\bea
\lim_{K\uparrow  \infty}\mathbb P\bigg[\tilde\theta^{K}_{\epsilon}<\tau_{\text{mut.}}\wedge\tfrac{\eta}{K u_K} \:\text{ and }
\sup_{s\in[\tilde\theta^{K}_{\epsilon},\tilde\theta^{K}_{\epsilon}+T(\e)]}\Big|\Big|\nu^K_s-\textstyle\sum_{x\in \mathcal X_{(\mathbf g,\mathbf p)}} \mathfrak n_x(s, \nu^K_0)\delta_{x}\Big|\Big|^{}_{TV}\leq \e^2\bigg]\qquad&&\quad\\\nonumber\geq 1- q_{(\mathbf g, \mathbf p)}(\tilde g,\tilde p)-C_6\e,&&
\eea
which implies
\be
\lim_{K\uparrow  \infty}\mathbb P\left[\tilde\theta^{K}_{\epsilon}<\theta^{K,\e}_{\text{near } \mathfrak{n}^*}<\tau_{\text{mut.}}\wedge\tfrac{\eta}{K u_K}\right]\geq 1- q_{(\mathbf g, \mathbf p)}(\tilde g,\tilde p)-C_6\e.
\ee
We used that, at time ${{\tilde\theta^{K}_{\epsilon}}}$,  the stochastic process $\nu^K$ (considered as element of $\mathbb R^{|\mathcal X_{(\mathbf g,\mathbf p)}|+k}$) lies in the compact set $A$, where $A$ is defined in (\ref{set A}).

\emph{The third invasion step.} After time $\theta^{K,\e}_{\text{near } \mathfrak{n}^*}$ we use again comparisons with multi-type branching processes to show that all individuals carrying a trait which is not present in the new equilibrium  $\mathfrak{n}^* $ die out. To this aim let us define
 \be \mathcal X^{\mathfrak {n}^*}_{\text{extinct}}=\{(g,p)\in \mathcal X_{((\mathbf g,\mathbf p),(\tilde g,\tilde p))}: \mathfrak{n}^*_{( g,  p)}((\mathbf g,\mathbf p),(\tilde g,\tilde p))=0\}
\ee

For proving that the populations with traits in $ \mathcal X^{\mathfrak {n}^*}_{\text{extinct}}$ stay small after $\theta^{K,\e}_{\text{near } \mathfrak{n}^*}$ and  that the populations with traits not in $ \mathcal X^{\mathfrak {n}^*}_{\text{extinct}}$ stay close to its equilibrium value after $\theta^{K,\e}_{\text{near } \mathfrak{n}^*}$, let us define
\be
\theta^{K, \e}_{\text{not small }}=\inf\left\{t\geq \theta^{K,\e}_{\text{near } \mathfrak{n}^*}: \exists (g,p)\in  \mathcal X^{\mathfrak {n}^*}_{\text{extinct}} \text { \:such that\: } \nu^K_t(g,p)> \e\right\}
\ee
and 
\be
\theta^{K, M\e}_{\text{ exit } \mathfrak {n}^*}\equiv
\inf\left\{t\geq \theta^{K,\e}_{\text{near } \mathfrak{n}^*}: \Big|\Big|\nu_t^K-\textstyle\sum_{x\in\mathcal X_{((\mathbf g,\mathbf p),(\tilde g,\tilde p))}} \mathfrak n^*_{x} ((\mathbf g,\mathbf p),(\tilde g,\tilde p)) \delta_{x}\Big|\Big|_{TV}>M \e \right\}.
\ee
By using first the strong Markov property at  $\theta^{K,\e}_{\text{near } \mathfrak{n}^*}$, 
 we can apply Theorem \ref{exit_domain_prob} (ii) and obtain that there exist constants $M>0$ and $C_7>0$ such that, for all $\e$ small enough,
\be
\lim_{K\uparrow  \infty}
\mathbb P\left [ \tilde\theta^{K}_{\epsilon}<\theta^{K,\e}_{\text{near } \mathfrak{n}^*}<\tau_{\text{mut.}}\wedge\tfrac{\eta}{K u_K} \:\text{ and } \:\theta^{K, M\e}_{\text{exit } \mathfrak {n}^*}<\eee^{KV}\!\wedge \tau^{}_{\text{mut.}}\wedge \theta^{K, \e}_{\text{not small }}\right]<C_7 \e 
\ee This is obtained in a similar way as Equation (\ref{exit_domain_apply}) in the first step.
Note that  $(g,p)\in \mathcal X^{\mathfrak {n}^*}_{\text{extinct}}$ implies that  $(g,p_i)\in \mathcal X^{\mathfrak {n}^*}_{\text{extinct}}$ for all $p_i\in[p]_g$,  which is a consequence of Assumption \ref{recurrence}.

Using the same arguments as in the first step, we can construct, 
 for all $(g,p)\in \mathcal X^{\mathfrak {n}^*}_{\text{extinct}}$, a $|[ p]_{ g}|$-type continuous-time
branching process  $Y^{\e, (g,p)}(s)$ with initial condition 
\be Y^{\e, (g,p)}_i(0)=\nu_{ \theta^{K,\e}_{\text{near } \mathfrak{n}*}}^K( g, p_i) K \qquad \text{ for all }p_i\in [ p]_{ g}
\ee
such that, for all $K$ large enough and, for all
 $ t\in[ \theta^{K,\e}_{\text{near } \mathfrak{n}^*}, 
 \theta^{K, M\e}_{\text{exit } \mathfrak {n}^*}\wedge\theta^{K, \e}_{\text{not 
 small }}\wedge \tau_{\text{mut.}}]$,
\be
\nu_{t}^K( g, p_i) K\leq  Y^{\e,(g,p)}_{i}(t-\theta^{K,\e}_{\text{near } \mathfrak{n}*})\qquad \text{ for all }{p}_i\in [ p]_{ g}. 
\ee
Moreover,  $Y^{\e, (g,p)}(t)$ is characterised as follows: 
For each  ${p}_i\in [ p]_{ g}$,  each individual in $Y^{\e, (g,p)}(t)$ with trait $(g, p_i)$ undergoes

\begin{enumerate}[(i)]
\setlength{\itemsep}{6pt}
 \item {birth (without mutation) with rate \:$ b( p_i)+2 ( | [ p]_{ g}|-1)s_{\text{ind.}} (M+| \mathcal X^{\mathfrak {n}^* }_{\text{extinct}}|)\epsilon$\:}
\item {death with rate \:$d( p_i) + \sum_{(\hat g,\hat p)\in\mathcal X_{(\mathbf g,\mathbf p),(\tilde g, \tilde p)}} c( p_i,\hat p) 
\mathfrak n^*_{(\hat g,\hat p)}((\mathbf g,\mathbf p),(\tilde g, \tilde p))-\bar c(M+| \mathcal X^{\mathfrak {n}^* }_{\text{extinct}}|)\:\e$  }
\item{for all $ j\neq i$, switch to $ p_j$ with rate \\
$s_{\text{nat.}}^{g}( p_i, p_j) +\sum_{(\hat g,\hat p)\in\mathcal X_{(\mathbf g,\mathbf p),(\tilde g, \tilde p)}} 				 s^{ g}_{\text{ind.}}( p_i,  p_j)( \hat p)\mathfrak n^*_{(\hat g,\hat p)}((\mathbf g,\mathbf p),(\tilde g, \tilde p))- \bar s_{\text{ind.}} (M+| \mathcal X^{\mathfrak {n}^* }_{\text{extinct}}|)\epsilon$ .}
\end{enumerate} 
Let 
 $\mathbf A(Y^{\e,(g,p)})$ denote  the infinitesimal generator of the process $Y^{\e,(g,p)}$.
 Since the equilibrium ${\mathfrak n^*}((\mathbf g,\mathbf p),(\tilde g, \tilde p))$ is locally strictly stable (cf.\ Ass.\ \ref{conv_to_fixedpoint}), the eigenvalues of the Jacobian matrix of the dynamical system at ${\mathfrak n^*}((\mathbf g,\mathbf p),(\tilde g, \tilde p))$ are all strictly negative. 
If $\e$ is small enough, this implies that all eigenvalues of  $\{\mathbf A(Y^{\e,(g,p)}) , (g,p)\in \mathcal X^{\mathfrak {n}^* }_{\text{extinct}} \}$ are strictly negative.
(There exists an order of the elements of  $\mathcal X_{(\mathbf g,\mathbf p),(\tilde g, \tilde p)} $ such that the Jacobian matrix is an upper-block-triangular matrix and $\{\mathbf A(Y^{0,(g,p)}) , (g,p)\in \mathcal X^{\mathfrak {n}^* }_{\text{extinct}} \}$ are on the diagonal.)  Thus,  for all $\e$  small enough, the branching processes $\{Y^{\e, (g,p)},(g,p)\in \mathcal X^{\mathfrak {n}^*}_{\text{extinct}}\}$ are all subcritical. Moreover, we can apply Lemma \ref{expansion of Thm 4} and get,  for all $\e$  small enough
and $(g,p)\in \mathcal X^{\mathfrak {n}^*}_{\text{extinct}}$
 \be
 \lim_{K\uparrow \infty}\:\mathbb P\left[\inf\{t\geq 0: Y^{\e, (g,p)}(t)=0\}\leq \frac{\eta}{Ku_K}\right]=1,
 \ee
and there exists a constant $C_8$ such that, for all $\e$ small enough and $(g,p)\in \mathcal X^{\mathfrak {n}^*}_{\text{extinct}}$,
\be
\lim_{K\uparrow \infty }
\mathbb P\left[ \inf\{t\geq 0: Y^{\e, (g,p)}(t)=\lceil \e K\rceil \}\leq \inf\{t\geq 
0: Y^{\e, (g,p)}(t)=0\}\right]\leq C_8 \e.
\ee
Hence, there exists a constant $M>0$ and $C_9>0$ such that, for all $\eta>0$ and $\e$ small enough,
\be
\lim_{K\uparrow \infty}\mathbb 
P\left[\tilde\theta^{K}_{\epsilon}<\theta^{K,M\epsilon}_{\text{Jump}}< 
\tau_{\text{mut.}}\wedge\frac{\eta}{Ku_K}\wedge \theta^{K, \e}_{\text{not 
small }}\right ]\geq1- q_{(\mathbf g, \mathbf p)}(\tilde g,\tilde p)-C_9\e,
\ee
which finishes the proof of the theorem.
\end{proof}
Combining all the previous results, we can prove similar as in \cite{C_TSS} that for, all $\e>0, t>0$ and
$\Gamma\subset \mathcal X$,
\begin{align}
\lim_{K\uparrow \infty}\mathbb P\Big[\text{Supp}(\nu^K_{t/Ku_K})=\Gamma, \text{ all traits of $\Gamma$ coexist in $LVS(|\Gamma|, \Gamma)$, } \quad&\\\nonumber
\text{and }||\nu^K_{t/Ku_K}-\sum_{x\in\Gamma}\bar{\mathfrak{n}}_x(\Gamma)\delta_{x}||_{TV}<\e&\Big]
=\mathbb P[\text{Supp}(\Lambda_t )=\Gamma]
\end{align}
where  $\Lambda$ is the PES with phenotypic plasticity defined in Theorem \ref{PESwP}. Finally, generalising this to any sequence of times $0<t_1<\ldots<t_n$, implies that $(\nu^K_{t/Ku_K})_{t\geq 0}$ converges in the sense of finite dimensional
distributions   to $(\Lambda_t)_{t\geq0}$ (cf.\ \cite{C_TSS}, Cor.\ 1 and Lem.\ 1), which ends the proof of Theorem \ref{PESwP}.
%
\subsection{Examples.}
Figure  \ref{mutants} shows  two examples  where in a population consisting only of type $(g,p)$ and being close to $\frakn (g,p)$ a mutation to genotype $\tilde g$ occurs. In these example, $\tilde g$ is associated with two possible phenotypes $\tilde p_1$ and $\tilde p_2$. 

\begin{figure}[h!]
	\centering
	\begin{minipage}{.48\textwidth} 
		A\!\! \!\!\!\includegraphics[width=\textwidth, height=140px]{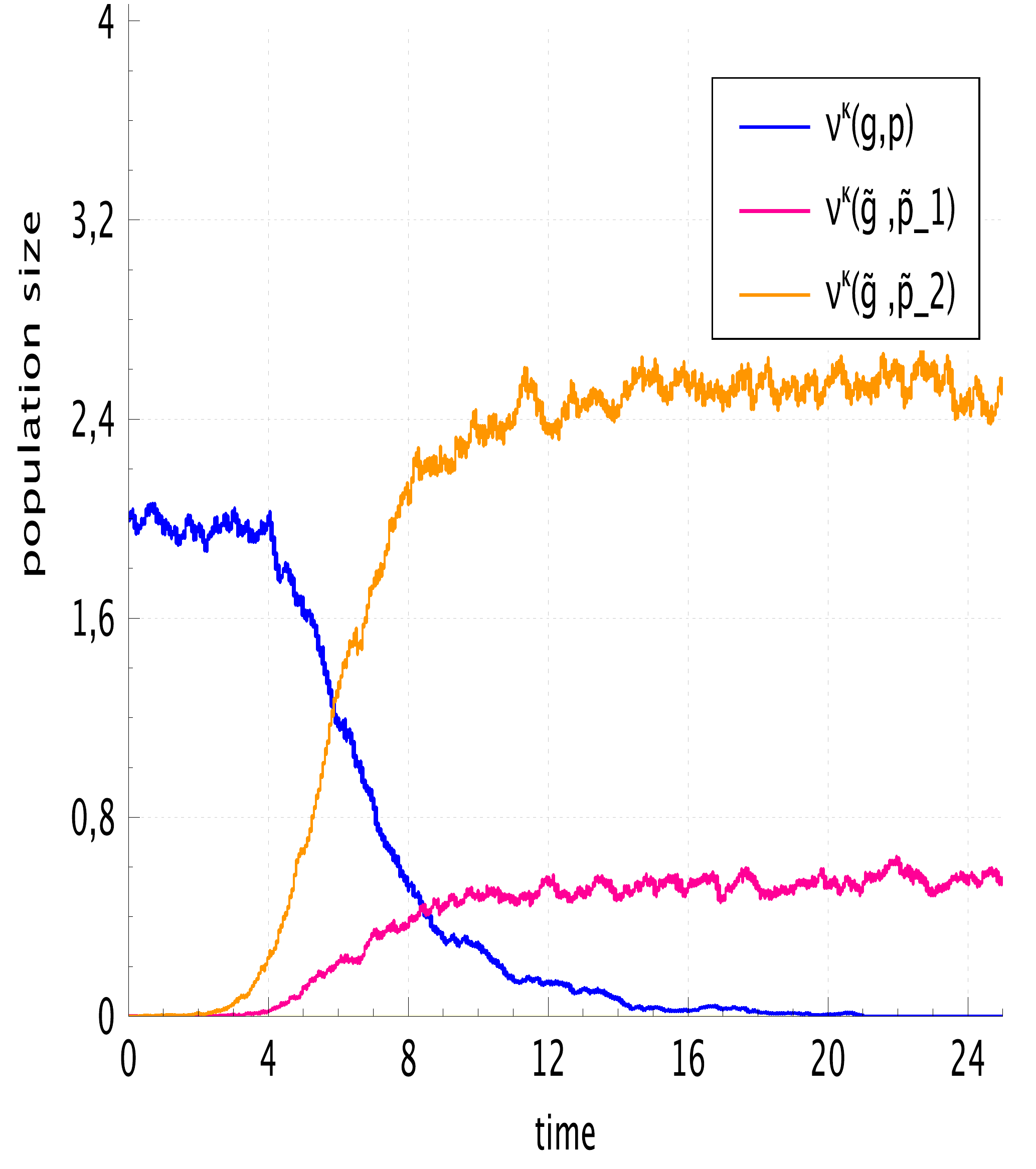}
	\end{minipage}\quad
	\begin{minipage}{.48\textwidth} 
		B \!\!\!\!\!\includegraphics[width=\textwidth, height=140px]{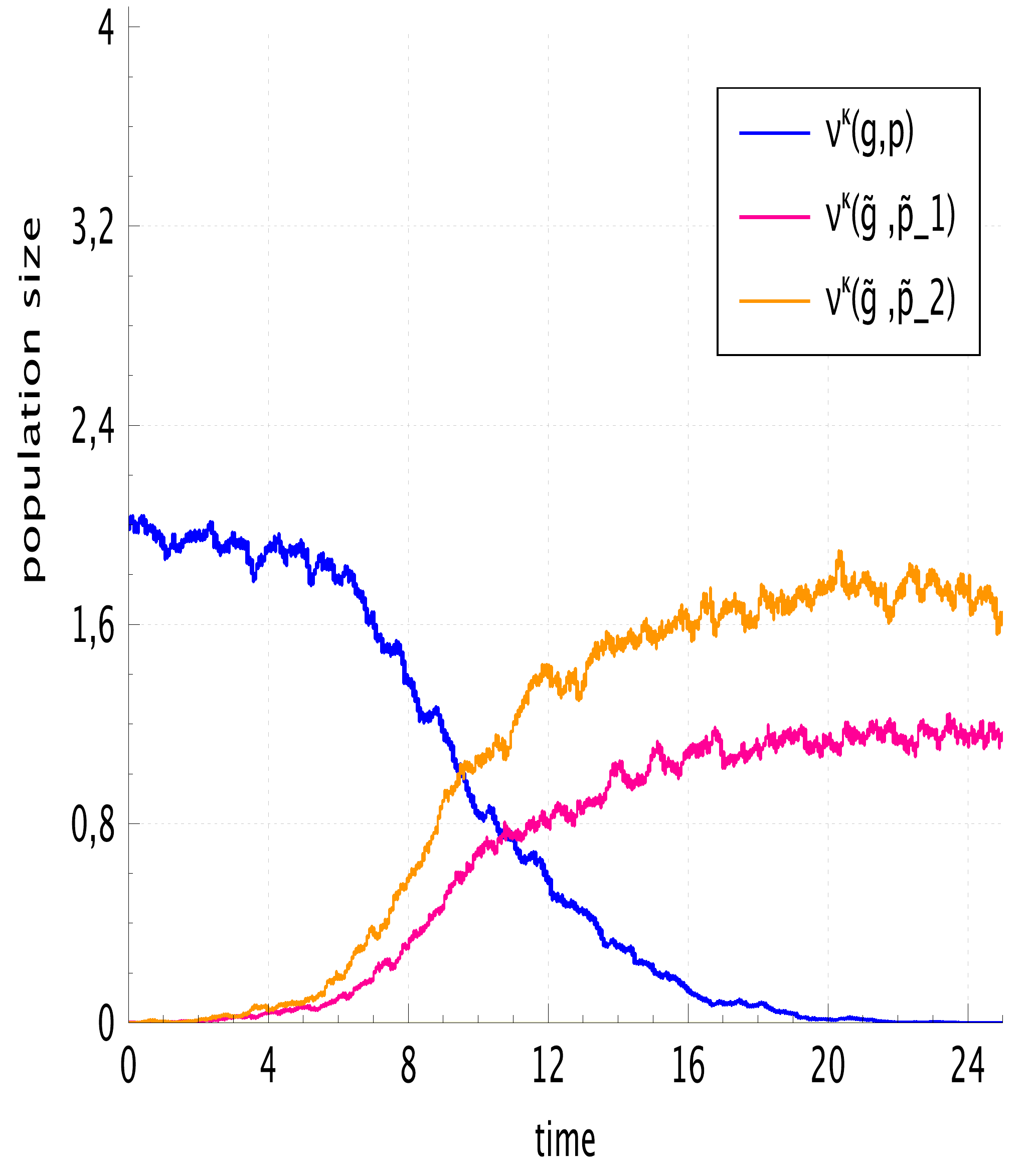}
	\end{minipage}
	
	\caption{\small {Simulations of the invasion phase with $K=1000$.
			(A) The mutant phenotype $\tilde p_1$ has a negative initial growth rate but can switch to $\tilde p_2$  which has a positive one. The fitness of the genotype $\tilde g$ is positive.
			(B) The fitness of the mutant genotype $\tilde g$ is positive, although 
			each phenotype has a 
			negative initial growth rate. This is possible because an outgoing switch is a loss of a cell for a phenotype, but not for the 
			whole genotype. 
			\label{mutants}} }
\end{figure}

In example (A), we start with a single mutant carrying trait $(\tilde g,\tilde p_1)$ and which can switch to $\tilde p_2$ but the back-switch is relative weak (cf.\ Tab.\ \ref{TableExaA}). According to definition \eqref{apparent-fitness} we have $f_{(g,p)}(\tilde g,\tilde p_1)<0$ and $f_{(g,p)}(\tilde g,\tilde p_2)>0$. 
However, the global fitness of the genotype $\tilde g$ is positive. More 
precisely, it is given by the largest eigenvalue of $\lb\begin{smallmatrix} -3 
&2\\ 0.6 &1 \end{smallmatrix}\rb$, which equals approximatively $1.280$. 
Therefore, the multi-type branching process approximating the mutant 
population in the first step is supercritical. This does not depend on  the 
phenotype of the first mutant, i.e.\  we would have the same if we had 
started with a single mutant carrying trait $(\tilde g,\tilde p_2)$). However, 
the probability of invasion depends this. In this example, the invasion 
probability is given by the solution of
\begin{align}
2y_1^2+2y_2+3-7y_1&\;=\;0,\\
4y_2^2+0.6y_1+2.4-7y_2&\;=\;0.
\end{align}
Thus, if we start with the trait $(\tilde g,\tilde p_1)$, the invasion probability is approximately 
$0.199$. Whereas it is $0.338$ if the first one has trait $(\tilde g,\tilde p_2)$. In Figure \ref{mutants} (A), the mutant population with genotype $\tilde g$ survives and the stochastic process is attracted to the new equilibrium $\frakn^*((g,p),(\tilde g, \tilde p_1), (\tilde g, \tilde p_2))\approx(0,0.543,2.554)$, which is a strictly stable.  

\begin{table}[h]
\centering\small
\begin{tabular}{|@{\hspace{ 0.1cm}}l@{\hspace{ 0.1cm}} |@{\hspace{ 0.1cm}}l @{\hspace{ 0.1cm}} |@{\hspace{ 0.1cm}}l@{\hspace{ 0.1cm}}|@{\hspace{ 0.1cm}}l@{\hspace{ 0.1cm}}| @{\hspace{ 0.1cm}}l @{\hspace{ 0.1cm}}|@{\hspace{ 0.1cm}}l @{\hspace{ 0.1cm}}||@{\hspace{ 0.1cm}}l@{\hspace{ 0.1cm}}|}\hline
$b(p)=3	  $ &$ d({p})=1	$ &$c({p},{p})=1	  $&$ c({p},\tilde p_1)=1$&$ c(p,\tilde p_2)=0.7$&$s_{\text{ind.}}^{.} (\, .\,, .\,)(.)\equiv 0 $&$\nu^K_0{(g,p)}=2$ \\[0.1em]\hline
$b(\tilde p_1)=2$&$	 d(\tilde p_1)=1	$&$	c(\tilde p_1,{p})=1 $&$ c(\tilde p_1,\tilde p_1)=1$&$c(\tilde p_1,\tilde p_2)=0.5$&$s^{\tilde g}(\tilde p_1,\tilde p_2)=2$&$\nu^K_0{(\tilde g,\tilde p_1)}=K^{-1}$\\[0.1em]\hline
$b(\tilde p_2)=4$&$	 d(\tilde p_2)=1	$&$	c(\tilde p_2,{p})=0.7 $&$ c(\tilde p_2,\tilde p_1)=0.5$&$c(\tilde p_2,\tilde p_2)=1$&$s^{\tilde g}(\tilde p_2,\tilde p_1)=0.6$&$\nu^K_0{(\tilde g,\tilde p_2)}=0	$\\[0.1em]\hline
\end{tabular}
\caption{\small Parameters of Figure \ref{mutants} (A) }\label{TableExaA}
\end{table}

In example (B), $f_{(g,p)}(\tilde g,\tilde p_1)$ and $f_{(g,p)}(\tilde g,\tilde p_2)$  are both negative. Nevertheless, the fitness of the genotype is positive and thus the mutant
invades with positive probability. (It is given by the largest eigenvalue of $\lb\begin{smallmatrix} -3 &2\\ 2 &-0.4 \end{smallmatrix}\rb$, which equals approximatively $0.685$.) However, the invasion probability is smaller in this example.  It is approximately 
$0.127$ if we start with the trait $(\tilde g,\tilde p_1)$ and  $0.207$ else. In Figure \ref{mutants} (B), the mutant population survives and the process is attracted to the stable fixed point $\frakn^*((g,p),(\tilde g, \tilde p_1), (\tilde g, \tilde p_2))\approx(0,1.153,1.745)$.  
Hence, this examples illustrate that the usual definition of invasion fitness fails for populations with phenotypic plasticity.
\begin{table}[h]
\centering\small
\begin{tabular}{|@{\hspace{ 0.1cm}}l@{\hspace{ 0.1cm}} |@{\hspace{ 0.1cm}}l @{\hspace{ 0.1cm}} |@{\hspace{ 0.1cm}}l@{\hspace{ 0.1cm}}|@{\hspace{ 0.1cm}}l@{\hspace{ 0.1cm}}| @{\hspace{ 0.1cm}}l @{\hspace{ 0.1cm}}|@{\hspace{ 0.1cm}}l @{\hspace{ 0.1cm}}||@{\hspace{ 0.1cm}}l@{\hspace{ 0.1cm}}|}\hline
$b(p)=3	  $ &$ d({p})=1	$ &$c({p},{p})=1	  $&$ c({p},\tilde p_1)=1$&$ c(p,\tilde p_2)=0.7$&$s_{\text{ind.}}^{.} (\, .\,, .\,)(.)\equiv 0 $&$\nu^K_0{(g,p)}=2$ \\[0.1em]\hline
$b(\tilde p_1)=2$&$	 d(\tilde p_1)=1	$&$	c(\tilde p_1,{p})=1 $&$ c(\tilde p_1,\tilde p_1)=1$&$c(\tilde p_1,\tilde p_2)=0.5$&$s^{\tilde g}(\tilde p_1,\tilde p_2)=2$&$\nu^K_0{(\tilde g,\tilde p_1)}=1/K$\\[0.1em]\hline
$b(\tilde p_2)=4$&$	 d(\tilde p_2)=1	$&$	c(\tilde p_2,{p})=0.7 $&$ c(\tilde p_2,\tilde p_1)=0.5$&$c(\tilde p_2,\tilde p_2)=1$&$s^{\tilde g}(\tilde p_2,\tilde p_1)=2$&$\nu^K_0{(\tilde g,\tilde p_2)}=0	$\\[0.1em]\hline
\end{tabular}
\caption{\small Parameters of Figure \ref{mutants} (B) }
\end{table}
	
\bibliographystyle{abbrv}
\bibliography{bibliography}

\end{document}